\documentclass[mathpazo]{cicp}

\begin{document}
\title{Kernel-free Boundary Integral Method for Allen--Cahn and Cahn--Hilliard Equations on Irregular Domains}



\author[Liu X et al.]{Xinru Liu\affil{1},
        Yulin Zhang\affil{1},
        Wenjun Ying\affil{2}, and
         Pensong Yin\affil{1}\comma\corrauth}

\address{\affilnum{1}\ School of Mathematical Sciences and Institute of Natural Sciences,
          Shanghai Jiao Tong University,
          Minhang, Shanghai 200240, P. R. China. \\\\
          \affilnum{2}\ School of Mathematical Sciences, MOE-LSC and Institute of Natural Sciences,
          Shanghai Jiao Tong University,
          Minhang, Shanghai 200240, P. R. China.}

\emails{{\tt yps2701155@sjtu.edu.cn} (P. Yin)}

\begin{abstract}
A unified kernel-free boundary integral (KFBI) framework is proposed for the Allen--Cahn and Cahn--Hilliard equations with homogeneous no-flux boundary conditions on two- and three-dimensional irregular domains. With a stabilized first-order implicit--explicit (IMEX) discretization, the Allen--Cahn update reduces to a Neumann modified Helmholtz problem. An auxiliary-variable reformulation reduces the Cahn--Hilliard update to subproblems of the same type without evaluating the Laplacian of the nonlinear source. The Cahn--Hilliard update uses two sequential solves for real shifts, while complex-conjugate shifts allow the real-valued solution to be reconstructed from a single complex solve. All subproblems are handled by the same KFBI solver that indirectly evaluates potentials through equivalent interface problems. 

For the Cahn--Hilliard equation, a geometry-weighted discrete $L^2$ projection is incorporated into the KFBI update to enforce the prescribed mass constraint. The resulting correction is spatially uniform and is computed using only values at physical grid nodes.

Numerical experiments with manufactured solutions demonstrate second-order spatial convergence and first-order convergence of the paired temporal-error indicators. Source-free simulations show discrete energy decay for both equations. In the Cahn--Hilliard computations, the mass projection maintains the prescribed discrete mass and causes only a small overall energy change compared with the observed dissipation.

\end{abstract}

\ams{65R20, 65M06, 65N06}
\keywords{Allen--Cahn equation,
Cahn--Hilliard equation,
Kernel-free boundary integral method, 
 Irregular domain,
 Stabilized IMEX scheme.}

\maketitle

\section{Introduction}
\label{sec1}
Phase-field models describe interfacial evolution through diffuse transition layers and are widely used in phase separation, solidification, grain growth, microstructure evolution, and multiphase flows \cite{allen1979microscopic,cahn1958free,chen2002phase,jacqmin1999calculation}. The Allen--Cahn and Cahn--Hilliard equations are the nonconservative \(L^2\) and conservative \(H^{-1}\) gradient flows of the Ginzburg-Landau free energy, respectively. Under homogeneous no-flux boundary conditions, both equations dissipate free energy, while the source-free Cahn--Hilliard equation also conserves mass.

The small interfacial parameter and nonlinear potential make phase-field equations stiff, motivating convex-splitting \cite{eyre1998unconditionally} and stabilized semi-implicit or implicit-explicit (IMEX) schemes \cite{he2007large,shen2010numerical,tang2016implicit}. In this work, we adopt a stabilized first-order IMEX discretization, which reduces the Allen--Cahn and Cahn--Hilliard updates to modified Helmholtz and modified bi-Helmholtz problems, respectively. The main computational task is therefore to solve these elliptic subproblems efficiently on irregular domains.

Common spatial discretizations of the phase-field equations include finite-difference, Fourier spectral, finite-element, finite-volume, and boundary integral methods. On rectangular or periodic domains, finite-difference and spectral methods provide efficient high-order approximations and fast solvers \cite{furihata2001stable,zhu1999coarsening}. For solid-state dewetting, Jiang et al. applied a stabilized semi-implicit cosine pseudospectral method to a variable-mobility Cahn--Hilliard model \cite{jiang2012phase}. On complex domains, finite-element and finite-volume methods can use boundary-fitted meshes, while mixed finite-element formulations reduce the fourth-order Cahn--Hilliard equation to coupled second-order problems \cite{elliott1987numerical,barrett1999finite}. Discontinuous Galerkin methods avoid globally $C^1$-continuous trial spaces through numerical fluxes \cite{wells2006discontinuous}. Boundary integral, embedded-boundary, immersed, cut-cell, and diffuse-domain methods provide alternatives that avoid body-fitted volume meshes \cite{johansen1998embedded,li2009diffuse}. Wei et al.~\cite{wei2020integral} addressed the two-dimensional Cahn--Hilliard wetting problem using an integral equation method, with a volume-potential reformulation that avoids numerical differentiation of the nonlinear source. 

For the Cahn--Hilliard equation, exact mass conservation at the fully discrete level requires compatibility among the spatial operator, mass quadrature, and no-flux boundary treatment. It can be achieved through flux cancellation in finite-difference and finite-volume methods, constant-function testing in finite elements, conservative numerical fluxes in discontinuous Galerkin methods, or preservation of the zero Fourier mode in spectral methods \cite{furihata2001stable,elliott1987numerical,wells2006discontinuous,cances2021finitevolume}. On irregular domains, Shin et al. used a boundary-control function to construct a conservative finite-difference method \cite{shin2011conservative}, while Kim et al. combined a related geometric treatment with linear IMEX Runge-Kutta schemes \cite{kim2023linear}.  These methods enforce conservation through the spatial discretization without a separate mass correction.

The kernel-free boundary integral (KFBI) method evaluates volume and boundary potentials through equivalent interface problems on Cartesian grids, avoiding explicit Green's functions and direct singular quadrature while retaining fast elliptic solvers \cite{ying2007kernel,ying2013kernel,xie2019biharmonic,zhou2024correction}. In the phase-field setting, Zeng and Xie \cite{zeng2025allen} applied the KFBI method to the Allen--Cahn equation on two-dimensional irregular domains with homogeneous Dirichlet boundary conditions, using volume and double-layer potentials. 

The present work develops a unified KFBI framework for the Allen--Cahn and Cahn--Hilliard equations with homogeneous no-flux conditions on two- and three-dimensional irregular domains. Within this framework, both equations are reduced to modified Helmholtz problems with homogeneous Neumann boundary conditions. Moreover, an auxiliary-variable reformulation of the Cahn--Hilliard update avoids explicitly evaluating the Laplacian of the nonlinear source. The resulting modified Helmholtz problems have either positive real or complex-conjugate shifts, both handled by a common KFBI procedure. In the complex-conjugate case, the real-valued phase field is reconstructed from the result of one KFBI solve with a complex shift.

The time-discrete, space-continuous Cahn--Hilliard scheme conserves mass. After spatial discretization, however, interface corrections and boundary-trace reconstruction in the KFBI implementation need not preserve the geometry-weighted discrete mass, and mass defects may accumulate over time. Qiao et al.~\cite{qiao2025massprojection} enforced mass conservation for stabilized radial basis function finite-difference schemes on closed surfaces using a surface-area-weighted $L^2$ projection. Motivated by this approach, we construct a geometry-weighted mass projection tailored to the KFBI method on irregular domains. Cut-control-volume weights associated with exterior nodes are redistributed to neighboring physical nodes, preserving the total discrete area or volume while excluding auxiliary extension values from the mass calculation. Using the same weights in the mass constraint and the discrete $L^2$ norm yields a spatially uniform correction of minimum weighted norm that restores the prescribed discrete mass without solving an additional elliptic problem.

The two main contributions are:
\begin{enumerate}
    \item A unified KFBI framework is developed for no-flux Allen--Cahn and Cahn--Hilliard equations on two- and three-dimensional irregular domains. For the Cahn--Hilliard update, an auxiliary-variable reduction avoids evaluating the Laplacian of the nonlinear source. Subproblems are handled by the same KFBI solver used for the Allen--Cahn update. The formulation requires two sequential solves for real shifts, while complex-conjugate shifts allow the real-valued update to be reconstructed from a single complex KFBI computation. 
    \item For the Cahn--Hilliard equation, a geometry-weighted discrete $L^2$ projection is incorporated into the KFBI time-stepping scheme to correct mass defects arising from spatial discretization. Applied after each provisional KFBI update, the projection restores the prescribed discrete mass through a spatially uniform correction without solving an additional elliptic problem. 
\end{enumerate}

Numerical experiments in two and three dimensions assess spatial accuracy, convergence of paired temporal-error indicators, mass conservation, and energy behavior, including the perturbation introduced by the projection.

The remainder of this paper is organized as follows. Section~2 introduces the model problems. Section~3 presents the stabilized first-order IMEX discretization. Section~4  derives the Neumann reductions and their common boundary integral formulation, and Section~5 describes the KFBI method. Section~6 introduces geometry-weighted mass projection for the Cahn--Hilliard equation, and Section~7 summarizes the complete algorithms. Section~8 presents numerical experiments and Section~9 gives the conclusion.

\section{Model Problems}
Let $\Omega\subset \mathbb{R}^d\,(d=2,3)$ be a  bounded domain with a smooth and irregular boundary $\partial \Omega$. Consider the Allen--Cahn equation with a no-flux boundary condition
\begin{subequations}\label{Allen--Cahn}
\begin{align}
    &\partial_t u=\varepsilon\Delta u-\varepsilon^{-1}f(u),&\text{in}\,\,&\Omega_T:=\Omega\times (0,T),\\
    &u(\boldsymbol{x},0)=u^0(\boldsymbol{x}),&\text{in}\,\,&\Omega,\\
    &\partial_{\boldsymbol{n}}u=0,&\text{on}\,\,&\partial\Omega_T:=\partial\Omega\times(0,T),
\end{align}
\end{subequations}
where $\Delta$ denotes the Laplace operator, $\boldsymbol{n}$ is the outward unit normal to $\partial\Omega$, $F(u)=\frac{1}{4}(u^2-1)^2$ denotes the double-well potential, $f(u)=F'(u)$, and $\partial_{\boldsymbol{n}}$ denotes the outward normal derivative on $\partial\Omega$.

Similarly, we seek $u=u(\boldsymbol{x},t)$ satisfying the Cahn--Hilliard equation with homogeneous no-flux boundary conditions
\begin{subequations}\label{cahn-hilliard}
\begin{align}
       & \partial_t u=\Delta\mu,&\text{in}\,\,&\Omega_T,\\
    &\mu=\varepsilon^{-1}f(u)-\varepsilon\Delta u,&\text{in}\,\,&\Omega_T,\\
    &u(\boldsymbol{x},0)=u^0(\boldsymbol{x}),&\text{in}\,\,&\Omega,\\
    &\partial_{\boldsymbol{n}}u=\partial_{\boldsymbol{n}}\mu=0,&\text{on}\,\,&\partial\Omega_T.
\end{align}
\end{subequations} 

\section{Stabilized First-Order IMEX Scheme}

Let $t_n=n\tau$, where $\tau>0$ is the time step, and let $s>0$ be a dimensionless stabilization parameter with $S=s/\varepsilon$. Following Rothe's method, first-order stabilized implicit-explicit (IMEX) schemes for the Allen--Cahn and Cahn--Hilliard equations were proposed in \cite{gao2012gradient, shen2010numerical} \begin{subequations}\label{SSI-Allen--Cahn}
    \begin{align}
        &\frac{u^{n+1}-u^n}{\tau}-\varepsilon\Delta u^{n+1}+\varepsilon^{-1}f(u^n)+S(u^{n+1}-u^n)=0,\,\,&\text{in}\,\,&\Omega,\\
        &\partial_{\boldsymbol{n}}u^{n+1}=0,\,\,&\text{on}\,\,&\partial\Omega,
    \end{align}
\end{subequations} and \begin{subequations}\label{SSI-cahn-hilliard}
    \begin{align}
            &\frac{u^{n+1}-u^n}{\tau}=\Delta \mu^{n+1},\,\,&\text{in}\,\,&\Omega,\\
        &\mu^{n+1}=-\varepsilon\Delta u^{n+1}+\varepsilon^{-1}f(u^n)+S(u^{n+1}-u^n),\,\,&\text{in}\,\,&\Omega,
    \end{align}
\end{subequations} with boundary conditions \begin{subequations}\label{ssi-bound-cond}
    \begin{align}
        \partial_{\boldsymbol{n}}u^{n+1}&=0,\,\,&\text{on}\,\,&\partial\Omega,\\
        \partial_{\boldsymbol{n}}\mu^{n+1}&=0,\,\,&\text{on}\,\,&\partial\Omega.
    \end{align}
\end{subequations}

Following \cite{tang2020efficient}, consider the energy decomposition
 \begin{equation*}
    \mathcal{E}(u)=\int_{\Omega}\left(\frac{\varepsilon}{2}|\nabla u|^2+\frac{S}{2}u^2\right)\,\mathrm{d}\boldsymbol{x}-\int_{\Omega}\left(\frac{S}{2}u^2-\varepsilon^{-1}F(u)\right)\,\mathrm{d}\boldsymbol{x}:=\mathcal{E}^+(u)-\mathcal{E}^-(u).    
\end{equation*}
The functional $\mathcal{E}^{+}$ is convex and $\mathcal{E}^{-}$ is convex on the set of functions satisfying $|u|\leq\sqrt{(s+1)/3}$. Therefore, the above decomposition provides a conditional convex-splitting interpretation under a suitable bound on the numerical solution. In the remainder of this paper, we refer to \eqref{SSI-Allen--Cahn} and \eqref{SSI-cahn-hilliard} as stabilized IMEX schemes.

For the Cahn--Hilliard equation, integrating the first equation in \eqref{SSI-cahn-hilliard} over $\Omega$ and using the no-flux boundary condition gives
\begin{equation*}
\int_{\Omega}u^{n+1}\,\mathrm{d}\boldsymbol{x}=\int_{\Omega}u^n\,\mathrm{d}\boldsymbol{x}=\cdots=\int_\Omega u^0\,\mathrm{d}\boldsymbol{x},\,\,\forall n\geq 0,
\end{equation*}
whereas the Allen--Cahn scheme is nonconservative. 

Furthermore, Tang and Yang \cite{tang2016implicit} proved that, if the IMEX scheme (\ref{SSI-Allen--Cahn}) is applied for time discretization and second-order centered differences are used for spatial discretization, then the resulting numerical approximation of the Allen--Cahn equation (\ref{Allen--Cahn}) satisfies the discrete maximum bound principle, provided that $s\geq2$ and $\|u^0\|_{L^\infty}\leq1$. Specifically,
\begin{equation*}
    \|u^{n+1}\|_{L^\infty}\leq1,
    \qquad \forall\,\tau>0,\quad \forall\,n\geq0.
\end{equation*}
Under these conditions, the finite-difference scheme is also energy stable.

For the Cahn--Hilliard equation \eqref{cahn-hilliard}, He, Liu, and Tang \cite{he2007large} showed that the time-discrete, space-continuous stabilized IMEX scheme \eqref{SSI-cahn-hilliard} satisfies the energy-dissipation inequality $\mathcal{E}(u^{n+1})\leq \mathcal{E}(u^n)$ for all $n\geq 0$, if 
\begin{equation*}
\begin{aligned}
s \geq {}&
\max_{\boldsymbol{x}\in\Omega}
\left\{
    \frac12|u^n(\boldsymbol{x})|^2
    +\frac14
    |u^{n+1}(\boldsymbol{x})+u^n(\boldsymbol{x})|^2
\right\}
-\frac12,
\qquad \forall\,n\geq0.
\end{aligned}
\end{equation*}
This sufficient condition depends on the numerical solution at two consecutive time levels. In this work, we set $s=2$ in all reported experiments. The energy behavior of the fully discrete KFBI-IMEX scheme, including the mass-projected Cahn--Hilliard implementation, is assessed numerically in Section~8.

To facilitate the application of the KFBI method, we rewrite equations (\ref{SSI-Allen--Cahn}) and (\ref{SSI-cahn-hilliard}), (\ref{ssi-bound-cond}) in the following forms \begin{subequations}\label{IMEX-Allen--Cahn}
    \begin{align}
        &\left(\frac{1}{\tau}+\varepsilon^{-1}s-\varepsilon\Delta\right)u^{n+1}=\left(\frac{1}{\tau}+\varepsilon^{-1}s\right)u^n-\varepsilon^{-1}f(u^n),\,\,&\text{in}\,\,&\Omega,\\
       & \partial_{\boldsymbol{n}}u^{n+1}=0,\,\,&\text{on}\,\,&\partial\Omega,
    \end{align}
\end{subequations} and \begin{subequations}\label{IMEX-cahn-hilliard}
    \begin{align}
        (\Delta^2-b\Delta+c)u^{n+1}&=f_1(\boldsymbol{x}),&\text{in}\,\,&\Omega,\label{IMEX-cahn-hilliard-a}\\
        (\Delta-b)u^{n+1}+\varepsilon^{-1}\mu^{n+1}&=f_2(\boldsymbol{x}),&\text{in}\,\,&\Omega,\label{IMEX-cahn-hilliard-b}\\
        \partial_{\boldsymbol{n}}u^{n+1}&=0,&\text{on}\,\,&\partial\Omega,\label{IMEX-cahn-hilliard-c}\\
        \partial_{\boldsymbol{n}}\mu^{n+1}&=0,&\text{on}\,\,&\partial\Omega,\label{IMEX-cahn-hilliard-d}
    \end{align}
\end{subequations} where $b=s\varepsilon^{-2},c=\tau^{-1}\varepsilon^{-1}$, and \begin{subequations}
    \begin{align}
        f_1(\boldsymbol{x})&=cu^n(\boldsymbol{x})+\Delta f_2(\boldsymbol{x}),\\
        f_2(\boldsymbol{x})&=\frac{(u^n(\boldsymbol{x}))^3-(1+s)u^n(\boldsymbol{x})}{\varepsilon^2}.
    \end{align}
\end{subequations}

\section{Neumann Reduction and Boundary Integral Formulation}

This section reformulates the time-discrete Allen--Cahn and Cahn--Hilliard equations as modified Helmholtz problems with homogeneous Neumann boundary conditions and derives corresponding boundary integral equations.

\subsection{Green's Function}
Let $\Omega\subset\mathbb{R}^d$ be a bounded irregular domain with boundary $\partial\Omega$, and let $\mathcal{B}$ be a box embedding $\Omega$. Let $G(\boldsymbol{x},\boldsymbol{y})$ be Green's function associated with the modified Helmholtz operator $\mathcal{L}=\Delta-\lambda^2$ on $\mathcal{B}$ such that for any fixed $\boldsymbol{y}\in\mathcal{B}$, 
    \begin{align*}
        \mathcal{L}G(\boldsymbol{x},\boldsymbol{y})&=\delta(\boldsymbol{x}-\boldsymbol{y}),\,\,&\text{in}\,\,&\mathcal{B},\\
        G(\boldsymbol{x},\boldsymbol{y})&=0,&\text{on}\,\,&\partial\mathcal{B},
    \end{align*}
where $\delta(\cdot)$ is the Dirac delta function.

\subsection{Boundary Integral Equation for the Allen--Cahn Update}

For the Allen--Cahn update, \eqref{IMEX-Allen--Cahn} gives 
    \begin{align*}
        (\Delta-\lambda^2)u^{n+1}&=q^n,\,\,&\text{in}\,\,&\Omega,\\
        \partial_{\boldsymbol{n}}u^{n+1}&=0,&\text{on}\,\,&\partial\Omega,
    \end{align*}
where $\lambda^2=\varepsilon^{-2}s+\tau^{-1}\varepsilon^{-1}$ and $ q^n=\varepsilon^{-2}f(u^n)-\lambda^2u^n$. Thus each Allen--Cahn step requires one Neumann modified Helmholtz solve with a positive real shift. The solution can be represented by 
\begin{equation*}\label{u-ac}
    u^{n+1}(\boldsymbol{x})=\int_{\Omega}G(\boldsymbol{x},\boldsymbol{y})q^n(\boldsymbol{y})\,\mathrm{d}\boldsymbol{y}-\int_{\partial\Omega}G(\boldsymbol{x},\boldsymbol{y})\psi(\boldsymbol{y})\,\mathrm{d}s_{\boldsymbol{y}},
\end{equation*} where the density $\psi$ satisfies the following boundary integral equation \begin{equation*}\label{BIE-ac}
    \frac{1}{2}\psi(\boldsymbol{x})-\frac{\partial}{\partial\boldsymbol{n}}\int_{\partial\Omega}G(\boldsymbol{x},\boldsymbol{y})\psi(\boldsymbol{y})\,\mathrm{d}s_{\boldsymbol{y}}=-\frac{\partial}{\partial\boldsymbol{n}}\int_\Omega G(\boldsymbol{x},\boldsymbol{y})q^n(\boldsymbol{y})\,\mathrm{d}\boldsymbol{y},
\end{equation*} which is a Fredholm integral equation of the second kind.

\subsection{Boundary Integral Equation for the Cahn--Hilliard Update}

\noindent\textbf{Auxiliary-variable reduction.}
For the Cahn--Hilliard update, define 
\begin{equation*}\label{eq:ch-shifts}
\begin{gathered}
b=\frac{s}{\varepsilon^2},\qquad
c=\frac{1}{\varepsilon\tau},\qquad D=b^2-4c,\\
\lambda_1^2=\begin{cases}
(b-\sqrt D)/2,&D\geq0,\\
(b+\mathrm{i}\sqrt{-D})/2,&D<0,
\end{cases}
\qquad\lambda_2^2=b-\lambda_1^2,\qquad \alpha=\lambda_1^2,\qquad\beta=\lambda_2^2.
\end{gathered}
\end{equation*}
$\lambda_i^2$, $i=1,2$, are the roots of $x^2-bx+c=0$. At the current time step, set
\begin{equation*}\label{eq:ch-nonlinear-source}
f_2=\frac{f(u^n)-s u^n}{\varepsilon^2}
=\frac{(u^n)^3-(1+s)u^n}{\varepsilon^2}.
\end{equation*}
Then equation~\eqref{IMEX-cahn-hilliard-a} becomes
\begin{equation*}\label{eq:ch-factorized-update}
(\Delta-\lambda_1^2)(\Delta-\lambda_2^2) u^{n+1}
=f_1,\qquad f_1=cu^n+\Delta f_2.
\end{equation*}
Introduce $z=(\Delta-\lambda_2^2)u^{n+1}-f_2
=\lambda_1^2u^{n+1}-\varepsilon^{-1}\mu^{n+1}$.  The second equality follows from \eqref{IMEX-cahn-hilliard-b} and $\lambda_1^2+\lambda_2^2=b$. Applying $\Delta-\lambda_1^2$ to $z$ gives $(\Delta-\lambda_1^2)z=cu^n+\lambda_1^2 f_2$. Hence the Laplacian of the nonlinear source cancels before spatial discretization.  Moreover, the no-flux conditions \eqref{ssi-bound-cond} at the new time step imply $\partial_{\boldsymbol n}z
=\lambda_1^2\partial_{\boldsymbol n}u^{n+1}
-\varepsilon^{-1}\partial_{\boldsymbol n}\mu^{n+1}
=0.$ The fourth-order update therefore reduces to
\begin{align}
&(\Delta-\lambda_1^2)z=cu^n+\lambda_1^2 f_2,
&\partial_{\boldsymbol n}z=0,
\label{ch-first-shift}\\
&(\Delta-\lambda_2^2) u^{n+1}=z+f_2,
&\partial_{\boldsymbol n} u^{n+1}=0.
\label{ch-second-shift}
\end{align}

\noindent\textbf{First shifted problem.}
Using Green's function $G_1$ associated with $\Delta-\lambda_1^2$, represent the solution as
\begin{equation}\label{eq:ch-first-representation}
z(\boldsymbol x)
=\int_\Omega G_1(\boldsymbol x,\boldsymbol y)
\bigl(cu^n(\boldsymbol y)+\lambda_1^2 f_2(\boldsymbol y)\bigr)\,\mathrm d\boldsymbol y
-\int_{\partial\Omega} G_1(\boldsymbol x,\boldsymbol y)
\psi_1(\boldsymbol y)\,\mathrm d s_{\boldsymbol y},
\qquad\boldsymbol x\in\Omega.
\end{equation}
The density $\psi_1$ satisfies the following boundary integral equation
\begin{equation}\label{eq:ch-first-bie}
\frac12\psi_1(\boldsymbol x)
-\frac{\partial}{\partial\boldsymbol n}\int_{\partial\Omega}
G_1(\boldsymbol x,\boldsymbol y)
\psi_1(\boldsymbol y)\,\mathrm d s_{\boldsymbol y}
=-\frac{\partial}{\partial\boldsymbol n}
\int_\Omega G_1(\boldsymbol x,\boldsymbol y)
\bigl(cu^n(\boldsymbol y)+\lambda_1^2 f_2(\boldsymbol y)\bigr)\,\mathrm d\boldsymbol y,
\qquad\boldsymbol x\in\partial\Omega.
\end{equation}

\noindent\textbf{Second shifted problem.}
With $z$ determined, use Green's function $G_2$ associated with $\Delta-\lambda_2^2$ to write
\begin{equation}\label{eq:ch-second-representation}
 u^{n+1}(\boldsymbol x)
=\int_\Omega G_2(\boldsymbol x,\boldsymbol y)
\bigl(z(\boldsymbol y)+f_2(\boldsymbol y)\bigr)
\,\mathrm d\boldsymbol y
-\int_{\partial\Omega} G_2(\boldsymbol x,\boldsymbol y)
\psi_2(\boldsymbol y)\,\mathrm d s_{\boldsymbol y},
\qquad\boldsymbol x\in\Omega.
\end{equation}
The density $\psi_2$ satisfies the following boundary integral equation
\begin{equation}\label{eq:ch-second-bie}
\frac12\psi_2(\boldsymbol x)
-\frac{\partial}{\partial\boldsymbol n}\int_{\partial\Omega}
G_2(\boldsymbol x,\boldsymbol y)
\psi_2(\boldsymbol y)\,\mathrm d s_{\boldsymbol y}
=-\frac{\partial}{\partial\boldsymbol n}
\int_\Omega G_2(\boldsymbol x,\boldsymbol y)
\bigl(z(\boldsymbol y)+f_2(\boldsymbol y)\bigr)
\,\mathrm d\boldsymbol y,
\quad\boldsymbol x\in\partial\Omega.
\end{equation}
Each density is determined by a Fredholm integral equation of the second kind.

Substituting \eqref{eq:ch-first-representation} into \eqref{eq:ch-second-representation} yields
\begin{equation}\label{eq:ch-expanded-integrals}
\begin{aligned}
 u^{n+1}(\boldsymbol x)
&=\int_\Omega G_2(\boldsymbol x,\boldsymbol y)
\left[\int_\Omega G_1(\boldsymbol y,\boldsymbol\xi)
\bigl(cu^n(\boldsymbol\xi)+\lambda_1^2 f_2(\boldsymbol\xi)\bigr)
\,\mathrm d\boldsymbol\xi\right]\mathrm d\boldsymbol y+\int_\Omega G_2(\boldsymbol x,\boldsymbol y)
f_2(\boldsymbol y)\,\mathrm d\boldsymbol y\\
&-\int_\Omega G_2(\boldsymbol x,\boldsymbol y)
\int_{\partial\Omega}G_1(\boldsymbol y,\boldsymbol\xi)
\psi_1(\boldsymbol\xi)\,\mathrm d s_{\boldsymbol\xi}
\,\mathrm d\boldsymbol y
-\int_{\partial\Omega}G_2(\boldsymbol x,\boldsymbol y)
\psi_2(\boldsymbol y)\,\mathrm d s_{\boldsymbol y},
\end{aligned}
\end{equation}
for $\boldsymbol x\in\Omega$. For simplicity,  define
\begin{equation*}\label{eq:common-potentials}
\begin{aligned}
\mathcal{Y}_i f(\boldsymbol x)
=\int_\Omega G_i(\boldsymbol x,\boldsymbol y)
f(\boldsymbol y)\,\mathrm d\boldsymbol y,\qquad
\mathcal{S}_i\psi(\boldsymbol x)
=\int_{\partial\Omega}G_i(\boldsymbol x,\boldsymbol y)
\psi(\boldsymbol y)\,\mathrm d s_{\boldsymbol y},
\qquad i=1,2.
\end{aligned}
\end{equation*}
Here $\mathcal{Y}_i$ and $\mathcal{S}_i$ denote the Yukawa volume potential and single-layer potential operators, respectively. Equation~\eqref{eq:ch-expanded-integrals} can then be rewritten as
\begin{equation*}\label{eq:ch-expanded-recovery}
 u^{n+1}
=\mathcal{Y}_2\bigl(\mathcal{Y}_1(cu^n+\lambda_1^2f_2)\bigr)
-\mathcal{Y}_2(\mathcal{S}_1\psi_1)
+\mathcal{Y}_2f_2-\mathcal{S}_2\psi_2.
\end{equation*}
The density equations can be written entirely in terms of the known source and the densities
\begin{subequations}\label{BIE-ch}
\begin{align}
\frac12\psi_1-\partial_{\boldsymbol n}(\mathcal{S}_1\psi_1)
&=-\partial_{\boldsymbol n}\bigl[\mathcal{Y}_1(cu^n+\lambda_1^2f_2)\bigr],
\label{eq:ch-first-bie-compact}\\
\frac12\psi_2-\partial_{\boldsymbol n}(\mathcal{S}_2\psi_2)
&=-\partial_{\boldsymbol n}\Bigl[
\mathcal{Y}_2\bigl(\mathcal{Y}_1(cu^n+\lambda_1^2f_2)\bigr)
-\mathcal{Y}_2(\mathcal{S}_1\psi_1)
+\mathcal{Y}_2 f_2\Bigr].
\label{eq:ch-second-bie-compact}
\end{align}
\end{subequations}

\subsection{Real and Complex-Conjugate Shifts}

The type of shifts is determined by the sign of the discriminant:  $D=b^2-4c
=\frac{s^2}{\varepsilon^4}
\left(1-\frac{\tau_c}{\tau}\right),$ $\tau_c=\frac{4\varepsilon^3}{s^2}.$ Only the shifts $\alpha=\lambda_1^2$ and $\beta=\lambda_2^2$ are required by the KFBI solver.

\noindent\textbf{Positive real shifts ($D\geq0$):
} When $\tau>\tau_c$, the two shifts are real, positive, and distinct. First solve \eqref{eq:ch-first-bie} and recover $z$ from \eqref{eq:ch-first-representation}. Then solve \eqref{eq:ch-second-bie} and recover $u^{n+1}$ from \eqref{eq:ch-second-representation}. When $\tau=\tau_c$ ($D=0$), the two shifts coincide. The same shifted Neumann operator is applied successively to two different right-hand sides. 

\noindent\textbf{Complex-conjugate shifts ($D<0$)}: When $0<\tau<\tau_c$, denote $\alpha=a+\mathrm{i}\eta$, $\beta=a-\mathrm{i}\eta,$ and $\eta=\sqrt{-D}/2>0$. Since $u^{n+1}$ and $f_2$ are real, the auxiliary-variable definition gives $z=(\Delta-\beta)u^{n+1}-f_2=(\Delta-a)u^{n+1}-f_2+\mathrm{i}\eta u^{n+1}$. Taking imaginary parts yields $\operatorname{Im}z=\eta u^{n+1}$, and hence
\begin{equation}\label{eq:ch-complex-reconstruction}
u^{n+1}
=\frac{\operatorname{Im}z}{\eta}
=\frac{\operatorname{Im}z}{\operatorname{Im}\alpha}.
\end{equation}
Solve only \eqref{eq:ch-first-bie} in complex arithmetic and recover $z$ through \eqref{eq:ch-first-representation}. The real phase field is then recovered directly from the imaginary part of $z$, eliminating the second complex computation.

\section{The Kernel-Free Boundary Integral Method}

Section 4 reduces the Allen--Cahn and Cahn--Hilliard updates to Neumann modified Helmholtz problems and derives their boundary integral equations. Following the KFBI approach, the single-layer boundary and volume integrals are evaluated indirectly through equivalent interface problems.


\subsection{Equivalent Interface Problems}
The Yukawa and single-layer potentials can be evaluated through the interface problems described below. The jump across the interface $\partial\Omega$ is denoted by 
\begin{align*}
	\begin{split}
[\![ u ]\!](\boldsymbol{x})=\lim_{\boldsymbol{y}\in\Omega,\boldsymbol{y}\to\boldsymbol{x}}u(\boldsymbol{y})-\lim_{\boldsymbol{y}\in\Omega^c,\boldsymbol{y}\to\boldsymbol{x}}u(\boldsymbol{y}),\,\boldsymbol{x}\in\partial\Omega.
	\end{split}
\end{align*}
 As stated in \cite{ying2007kernel, ying2013kernel}, the Yukawa and boundary potentials can be expressed as equivalent interface problems in a general form, \begin{align}\label{interface-problem}
\begin{split}
(\Delta-\lambda^2)u&= F,\,\,\;\;\text{in}\,\,\mathcal{B}\setminus\partial\Omega,\\
[\![ u ]\!]&=\Phi, \,\,\;\;\text{on}\,\,\partial\Omega,\\
[\![ \partial_{\boldsymbol{n}}u]\!]&=\Psi,\,\,\;\;\text{on}\,\,\partial\Omega,\\
u&=0, \,\,\;\,\,\,\text{on}\,\,\partial\mathcal{B}.
\end{split}	
\end{align} For the Yukawa potential $\mathcal{Y}f$, $F$ equals $f$ in $\Omega$ and vanishes outside $\Omega$, while $\Phi=\Psi=0$. For the single-layer potential $\mathcal{S}\psi$, $F=\Phi=0,\Psi=-\psi$. The same formulation applies to a complex shift $\lambda^2$, with the Cartesian-grid solve and the reconstructed traces carried out in complex arithmetic.

To evaluate the Yukawa and single-layer potentials in \eqref{BIE-ac} and \eqref{BIE-ch}, the KFBI method replaces the direct kernel-based evaluation by solving the equivalent interface problem \eqref{interface-problem}. This avoids the explicit use of Green's functions and direct singular quadrature.

 \subsection{Cartesian Grid Method for the Interface Problem}

The embedding box is discretized by a uniform Cartesian grid. A second-order central finite difference scheme is used at grid nodes. At irregular nodes, where the finite difference stencil crosses the interface, correction terms are constructed from the prescribed jumps in the solution and its normal derivative. Since the interface corrections modify only the right-hand side, the coefficient matrix remains identical to that of the standard modified Helmholtz problem on the rectangular embedding box. The resulting Cartesian-grid system can therefore be solved efficiently using an FFT-based elliptic solver. 

After the interface problem is solved, the boundary values and normal derivatives required by the boundary integral formulation are reconstructed locally from nearby grid values using polynomial interpolation incorporating the interface jump data. Differentiating this polynomial in the outward normal direction gives the interior normal trace needed by the Neumann formulation.

The resulting boundary integral equation is discretized at boundary nodes and solved by the generalized minimal residual (GMRES) method, with each evaluation of the discrete boundary integral operator performed by the Cartesian-grid solver. Consequently, the Allen--Cahn solver, the real-shift Cahn--Hilliard solver, and the complex-shift Cahn--Hilliard solver share the same spatial-solver architecture.

\section{Geometry-Weighted Mass Projection for the KFBI Method}

This section introduces a geometry-weighted mass projection for the Cahn--Hilliard KFBI-IMEX scheme. Cut-control-volume weights are transferred from exterior to physical nodes, so that the discrete mass uses only physical-node values. The weighted $L^2$ projection then restores the prescribed mass through a spatially uniform correction, without solving an additional elliptic problem.

\subsection{Discrete Mass Defect of the Provisional KFBI Update}

Let $\mathcal J_h$ denote the index set of all grid nodes, and let $I_h=\{i\in\mathcal J_h:\boldsymbol x_i\in\Omega\}$ be the physical-node index set. The provisional KFBI-IMEX step is 
\begin{equation*}\label{eq:mass-provisional-update}
 \widetilde U_h^{\,n+1}=S_{h,\tau}(U_h^n),
 \qquad
 \widetilde u_h^{\,n+1}
 =\left.\widetilde U_h^{\,n+1}\right|_{I_h}.
\end{equation*}
Here $S_{h,\tau}$ is the provisional KFBI-IMEX solution operator. Uppercase symbols denote fields on the embedding grid, whereas lowercase symbols denote their restrictions to $I_h$.

Let $\omega_i>0$, $i\in I_h$, be the geometric weights, and define
\begin{equation*}\label{eq:mass-functional-kfbi}
 M_h(v_h)=\sum_{i\in I_h}\omega_i v_i,
 \qquad
 M_h^\star=M_h(u_h^0).
\end{equation*}
The target $M_h^\star$ is fixed for the source-free problem and the provisional mass defect is $d_h^{n+1}
 =M_h(\widetilde u_h^{\,n+1})-M_h^\star.$ After spatial discretization, the KFBI interface corrections and boundary-trace reconstruction do not automatically satisfy a discrete Green's identity with respect to $M_h$. Thus the provisional update need not preserve the geometry-weighted mass, and its defects may accumulate over successive time steps. We impose the mass constraint after the provisional elliptic solve:
\begin{equation*}\label{eq:combined-overview}
 U_h^n
 \xrightarrow{\;S_{h,\tau}\;}
 \widetilde U_h^{\,n+1},
 \qquad
 u_h^{n+1}
 =P_h\left(\left.\widetilde U_h^{\,n+1}\right|_{I_h}\right).
\end{equation*}

\subsection{Cut-Control-Volume Weights and Redistribution}

Let $\Omega=\{\boldsymbol x:\phi(\boldsymbol x)<0\}$ be the fixed physical domain. On a Cartesian grid of uniform spacing $h$, the nodal control volume is $ V_i=\mathcal B\cap
 \prod_{\ell=1}^{d}
 \left[x_{i,\ell}-\frac h2,x_{i,\ell}+\frac h2\right],$ $d=2,3.$ The preliminary geometric weight is the area or volume of the portion of $V_i$ inside the reconstructed domain $ \widehat\omega_i=|V_i\cap\Omega_h|$, $i\in\mathcal J_h$, where $\Omega_h$ is obtained by piecewise-linear interpolation of the level-set function, and $|\cdot|$ denotes area in two dimensions and volume in three dimensions.

To compute $\widehat\omega_i$, each control volume is divided into four triangles in two dimensions or twelve tetrahedra in three dimensions. Within each simplex, the portion where the interpolated level-set function is non-positive is retained. Summing the retained areas or volumes gives the geometric weight.

A control volume may intersect $\Omega_h$ even when its center lies outside $\Omega$. Its positive overlap measure must be included in the geometric quadrature, but its nodal value is an auxiliary KFBI extension value. To obtain a mass functional supported only on physical nodes, define the set of exterior contributors $ E_h=\{i\in\mathcal J_h\setminus I_h:
          \widehat\omega_i>0\}.$ For each $i\in E_h$, set $  r_i=\min\left\{r\in\{1,2,3\}:\mathcal N_i(r)\neq\varnothing\right\}$ and $\mathcal N_i(r)=\left\{j\in I_h:\|j-i\|_\infty=r\right\}$. The overlap measure is distributed equally among the nodes in the nearest ring $\mathcal N_i(r_i)$: 
\begin{equation*}\label{eq:mass-redistribution-coefficients}
 \theta_{ij}
 =\begin{cases}
   |\mathcal N_i(r_i)|^{-1},&j\in\mathcal N_i(r_i),\\
   0,&\text{otherwise},
  \end{cases}
 \qquad
 \sum_{j\in I_h}\theta_{ij}=1.
\end{equation*}
The final physical-node weights are
\begin{equation}\label{eq:mass-final-weights}
 \omega_j
 =\widehat\omega_j
  +\sum_{i\in E_h}\theta_{ij}\widehat\omega_i,
 \qquad j\in I_h.
\end{equation}
Furthermore,
\begin{equation*}\label{eq:mass-measure-preservation}
 \begin{aligned}
 A_h=\sum_{j\in I_h}\omega_j
=\sum_{j\in I_h}\widehat\omega_j
   +\sum_{i\in E_h}\widehat\omega_i
       \sum_{j\in I_h}\theta_{ij}
 =\sum_{i\in\mathcal J_h}\widehat\omega_i
 =|\Omega_h|.
 \end{aligned}
\end{equation*}
Thus the discrete mass retains all cut-control-volume contributions while using only physical-node values. The same weights define the weighted-$L^2$ projection in the next subsection. For the fixed domain, the weights and $A_h$ are computed
once and reused at every time step.

\subsection{Geometry-Weighted $L^2$ Mass Projection}

The mass correction is chosen to minimize the change in the physical
phase field in the same geometric metric used to measure its mass.
Using the weights in \eqref{eq:mass-functional-kfbi}, introduce the geometry-weighted inner product and norm
\begin{equation*}\label{eq:mass-weighted-metric}
 (v_h,w_h)_{h,\omega}
 =\sum_{i\in I_h}\omega_i v_iw_i,
 \qquad
 \|v_h\|_{h,\omega}^2=(v_h,v_h)_{h,\omega},
\end{equation*}
and the affine constraint set
\begin{equation*}\label{eq:mass-affine-set}
 \mathcal A_h=\{v_h:M_h(v_h)=M_h^\star\}.
\end{equation*}
The corrected solution is defined as the weighted-$L^2$ projection of the provisional field onto $\mathcal A_h$
\begin{equation*}\label{eq:mass-projection-minimization}
 u_h^{n+1}=P_h\widetilde u_h^{\,n+1}
 =\operatorname*{arg\,min}_{v_h\in\mathcal A_h}
       \frac12\|v_h-\widetilde u_h^{\,n+1}\|_{h,\omega}^2.
\end{equation*}
Because every physical weight is positive, this minimizer is unique. Introducing a Lagrange multiplier $\lambda$, the stationarity conditions give
\begin{equation*}
  \omega_i\bigl(v_i-\widetilde u_{h,i}^{\,n+1}\bigr)
  -\lambda\omega_i=0,\qquad i\in I_h.
\end{equation*}
Hence the minimum-norm correction is spatially constant, and the mass constraint determines it as
\begin{equation*}
  \lambda=\delta_h^{n+1}=
  \frac{M_h^\star-M_h(\widetilde u_h^{\,n+1})}{A_h}.
\end{equation*}
Thus $\delta_h^{n+1}$ is the constant correction (equivalently, the Lagrange multiplier) generated by the weighted-$L^2$ projection. The projection applies this constant to every physical degree of freedom
\begin{equation*}
  u_{h,i}^{n+1}=(P_h\widetilde u_h^{\,n+1})_i
  =\widetilde u_{h,i}^{\,n+1}+\delta_h^{n+1},\qquad i\in I_h.
\end{equation*}

In exact arithmetic, mass conservation follows by direct expansion:
\begin{equation*}
  \begin{aligned}
  M_h(u_h^{n+1})
  &=\sum_{i\in I_h}\omega_i
    \left(\widetilde u_{h,i}^{\,n+1}+\delta_h^{n+1}\right)\\
  &=M_h(\widetilde u_h^{\,n+1})
    +\delta_h^{n+1}M_h(1)\\
  &=M_h(\widetilde u_h^{\,n+1})
    +\delta_h^{n+1}A_h=M_h^\star.
  \end{aligned}
\end{equation*}
In floating-point arithmetic, the remaining mass error is affected by summation and rounding errors. The implementation applies the same constant to the full embedding-grid field $ U_{h,i}^{n+1}
 =\widetilde U_{h,i}^{\,n+1}+\delta_h^{n+1}
  $, $i\in\mathcal J_h.$ The extension values do not enter the mass calculation; they are updated only to provide a synchronized full-grid field for the subsequent KFBI-IMEX step. The correction requires only a weighted summation and
a uniform addition, with no additional elliptic solve.

The following proposition shows that the mass correction is stable
in the geometry-weighted norm and does not increase the error
relative to a mass-compatible discrete reference solution.

\begin{proposition}[Stability of the mass projection]
Assume that $\omega_i>0$ for all $i\in I_h$. For any discrete
field $v_h$ and any $w_h\in\mathcal A_h$, the mass projection
satisfies
\begin{equation}
\|P_hv_h-w_h\|_{h,\omega}^2
=
\|v_h-w_h\|_{h,\omega}^2
-\frac{|M_h(v_h)-M_h^\star|^2}{A_h}.
\label{eq:projection-stability}
\end{equation}
Consequently,
\begin{equation}
\|P_hv_h-w_h\|_{h,\omega}
\leq
\|v_h-w_h\|_{h,\omega},
\label{eq:projection-nonexpansive}
\end{equation}
and
\begin{equation*}
\|P_hv_h-v_h\|_{h,\omega}
=
\frac{|M_h(v_h)-M_h^\star|}{\sqrt{A_h}}.
\label{eq:projection-correction-norm}
\end{equation*}
\end{proposition}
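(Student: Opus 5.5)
The plan is to use the explicit form of $P_h$ derived just before the proposition, namely $P_hv_h=v_h+\delta_h\mathbf 1$ with $\delta_h=(M_h^\star-M_h(v_h))/A_h$. Here $\mathbf 1$ is the constant grid function on $I_h$ and $A_h=M_h(\mathbf 1)=\sum_{i\in I_h}\omega_i>0$, which holds because every $\omega_i>0$. The whole argument is then a Pythagorean identity in the inner product $(\cdot,\cdot)_{h,\omega}$. It rests on the observation that $M_h(v_h)=(v_h,\mathbf 1)_{h,\omega}$, so the mass functional is the inner product against the constant field.

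First we expand
\begin{equation*}
\|P_hv_h-w_h\|_{h,\omega}^2=\|v_h-w_h\|_{h,\omega}^2+2\delta_h(v_h-w_h,\mathbf 1)_{h,\omega}+\delta_h^2\|\mathbf 1\|_{h,\omega}^2 .
\end{equation*}
Next we evaluate the two correction terms. We have $\|\mathbf 1\|_{h,\omega}^2=A_h$. Since $w_h\in\mathcal A_h$, we also have $(v_h-w_h,\mathbf 1)_{h,\omega}=M_h(v_h)-M_h(w_h)=M_h(v_h)-M_h^\star=-\delta_hA_h$. Substituting, the cross term becomes $-2\delta_h^2A_h$, and together with $+\delta_h^2A_h$ the net change is $-\delta_h^2A_h=-|M_h(v_h)-M_h^\star|^2/A_h$. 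This is exactly \eqref{eq:projection-stability}. Equivalently, $P_hv_h-w_h$ is orthogonal to $\mathbf 1$ and $P_hv_h-v_h$ is parallel to $\mathbf 1$, which is the geometric reason the identity holds.

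Inequality \eqref{eq:projection-nonexpansive} follows immediately, because the subtracted term is nonnegative and $A_h>0$. For the last identity we compute directly $\|P_hv_h-v_h\|_{h,\omega}^2=\delta_h^2\|\mathbf 1\|_{h,\omega}^2=\delta_h^2A_h$. Taking square roots gives $|M_h(v_h)-M_h^\star|/\sqrt{A_h}$.

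There is no real obstacle here. The only point needing care is to make sure that the constraint $w_h\in\mathcal A_h$ is what turns the cross term into a multiple of the defect. We also need positivity of the weights, which guarantees $A_h>0$, so that $\delta_h$ and the division in \eqref{eq:projection-stability} are well defined and $(\cdot,\cdot)_{h,\omega}$ is a genuine inner product. If complex-valued fields ever had to be considered, we would replace $\delta_h^2$ by $|\delta_h|^2$ and take real parts of the cross term. Since the phase field here is real, this does not arise.
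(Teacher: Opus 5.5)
Your proof is correct and is essentially the paper's argument. Both use the explicit constant-shift form of $P_h$ together with $M_h(\cdot)=(\cdot,\mathbf 1_h)_{h,\omega}$ and $A_h=\|\mathbf 1_h\|_{h,\omega}^2$; the paper writes $P_hv_h-w_h=e_h-\frac{(e_h,\mathbf 1_h)_{h,\omega}}{A_h}\mathbf 1_h$ and reads off the Pythagorean identity, whereas you expand the square and evaluate the cross term via $w_h\in\mathcal A_h$, which is the same computation.
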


\begin{proof}
Denote $\mathbf 1_h=(1,1,\cdots,1)^T$. Since $w_h\in\mathcal A_h$, we have
$M_h(w_h)=M_h^\star$. Let $e_h=v_h-w_h$. Using
\[
P_hv_h
=
v_h-\frac{M_h(v_h)-M_h^\star}{A_h}\mathbf 1_h,
\qquad
A_h=\|\mathbf 1_h\|_{h,\omega}^2,
\]
we obtain
\[
P_hv_h-w_h
=
e_h-\frac{(e_h,\mathbf 1_h)_{h,\omega}}{A_h}\mathbf 1_h.
\]
Therefore,
\[
\begin{aligned}
\|P_hv_h-w_h\|_{h,\omega}^2
&=
\|e_h\|_{h,\omega}^2
-\frac{|(e_h,\mathbf 1_h)_{h,\omega}|^2}{A_h}\\
&=
\|v_h-w_h\|_{h,\omega}^2
-\frac{|M_h(v_h)-M_h^\star|^2}{A_h}.
\end{aligned}
\]
This proves \eqref{eq:projection-stability} and
\eqref{eq:projection-nonexpansive}. Moreover,
\[
P_hv_h-v_h
=
-\frac{M_h(v_h)-M_h^\star}{A_h}\mathbf 1_h,
\]
which gives \eqref{eq:projection-correction-norm}.
\end{proof} As a direct consequence, if a mass-compatible discrete reference
solution $w_h^{n+1}\in\mathcal A_h$ satisfies
\[
\|\widetilde u_h^{\,n+1}-w_h^{n+1}\|_{h,\omega}
\leq C(h^p+\tau^q),
\]
then
\[
\|P_h\widetilde u_h^{\,n+1}-w_h^{n+1}\|_{h,\omega}
\leq C(h^p+\tau^q).
\]
Hence the mass projection does not reduce the spatial or temporal
order of the provisional approximation relative to a
mass-compatible discrete reference solution.

The provisional stabilized IMEX update is energy dissipative under the corresponding stabilization condition at the time-discrete, space-continuous level. The subsequent constant mass projection leaves the gradient-energy contribution unchanged but introduces a sign-indefinite perturbation to the bulk energy. Section~8 quantifies the energy perturbation together with the mass defects and the observed energy decay.

\section{Algorithm Summary}

Algorithm~\ref{alg:Framework} combines the common Neumann KFBI solver,
the Allen--Cahn and Cahn--Hilliard updates, and the geometry-weighted
mass projection into a complete time-stepping procedure for the
source-free problems. 
Let $W_h=\mathcal K_h(\kappa,r_h)$ denote a Neumann KFBI solver for
$(\Delta-\kappa)w=r$ in $\Omega$ with
$\partial_{\boldsymbol n}w=0$ on $\partial\Omega$, where $r_h$ is
the source sampled at the physical nodes. The output $W_h$ is the
field on the embedding grid, and $W_h|_{I_h}$ approximates the
physical solution. $\mathbf 1_{\mathcal J_h}$
denotes the constant-one field on the embedding grid.

\begin{algorithm}[!ht]
  \caption{Unified KFBI-IMEX scheme for the source-free
  Allen--Cahn and Cahn--Hilliard equations.}
  \label{alg:Framework}
  \small
  \begin{algorithmic}
    \Require Initial field
    $U_h^0$, parameters $\varepsilon,s,\tau$, and number of steps $N_t$.
    \State Construct the Cartesian grid and boundary discretization;
    identify physical and irregular nodes.
    \State Compute the shift parameters for the selected model.
    \State $u_h^0\gets U_h^0|_{I_h}$.
    \If{Cahn--Hilliard model}
      \State Compute the geometric weights
      $\{\omega_i\}_{i\in I_h}$ .
      \State $A_h\gets\sum_{i\in I_h}\omega_i$,
      \quad $M_h^\star\gets M_h(u_h^0)$.
    \EndIf
    \For{$n=0,\ldots,N_t-1$}
      \If{Allen--Cahn model}
        \State $q_h^n\gets\varepsilon^{-2}f(u_h^n)-\lambda^2u_h^n$.
        \State $U_h^{n+1}\gets\mathcal K_h(\lambda^2,q_h^n)$.
      \Else
        \State $f_{2,h}^n\gets
        \varepsilon^{-2}\bigl(f(u_h^n)-s u_h^n\bigr)$.
        \State $Z_h\gets
        \mathcal K_h(\alpha,c u_h^n+\alpha f_{2,h}^n)$.
        \If{$D\geq0$}
          \State $\widetilde U_h^{\,n+1}\gets
          \mathcal K_h(\beta,Z_h|_{I_h}+f_{2,h}^n)$.
        \Else
          \State $\widetilde U_h^{\,n+1}\gets
          \operatorname{Im}Z_h/\operatorname{Im}\alpha$.
        \EndIf
        \State $\widetilde u_h^{\,n+1}\gets
        \widetilde U_h^{\,n+1}|_{I_h}$.
        \State $\delta_h^{n+1}\gets
        \bigl(M_h^\star-M_h(\widetilde u_h^{\,n+1})\bigr)/A_h$.
        \State $U_h^{n+1}\gets\widetilde U_h^{\,n+1}
        +\delta_h^{n+1}\mathbf 1_{\mathcal J_h}$.
      \EndIf
      \State $u_h^{n+1}\gets U_h^{n+1}|_{I_h}$.
    \EndFor
  \end{algorithmic}
\end{algorithm}


The manufactured sources and projection targets used in the numerical tests are specified in Section~8. Prescribed source terms are incorporated through the right-hand sides of the shifted elliptic subproblems, without changing the KFBI solution procedure. For the Cahn--Hilliard equation, the mass-projection target is chosen consistently with the prescribed mass evolution.

\section{Numerical Results}
\label{sec;numerical}

The proposed method is tested on two- and three-dimensional irregular domains. The Allen--Cahn solver is assessed in Examples~1-4, and the mass-projected Cahn--Hilliard solver in Examples~5-8. Spatial errors and paired temporal-error indicators are evaluated using manufactured solutions. Phase evolution, energy dissipation, and Cahn--Hilliard mass conservation are examined in source-free simulations.

The embedding box is fixed as $\mathcal{B}=[-1,1]^d$, $d=2,3$. A rotated ellipse and an ellipsoid are used for the manufactured-solution tests; a rotated five-fold star-shaped domain and a torus are used for the source-free simulations. The geometries are defined in the corresponding examples. In all tests, the stabilization parameter and GMRES tolerance are set to $s=2$ and $10^{-10}$, respectively. The average total number of GMRES iterations per time step is denoted by $\overline{k}_{\rm G}$. The corresponding averages for the smaller and larger shifts are denoted by $\overline{k}_{\rm G}^{\rm s}$ and $\overline{k}_{\rm G}^{\rm l}$. In the paired temporal tests, the counts are reported as $\overline{k}_{\rm G}^{\rm cont}/\overline{k}_{\rm G}^{\rm disc}$ for the continuous-source and discrete-compatible computations, respectively.

Let $I_h$ be the index set of Cartesian grid nodes in the physical domain,
where $h$ is the mesh width. For a grid function
$v_h=\{v_{h,i}\}_{i\in I_h}$, the discrete error norms are defined by 
\[
\|v_h\|_{2,h}
=\left(\sum_{i\in I_h}\omega_i|v_{h,i}|^2\right)^{1/2},
\qquad
\|v_h\|_{\infty,h}
=\max_{i\in I_h}|v_{h,i}|.
\]
Spatial accuracy and temporal consistency are assessed separately using  a continuous source and a discrete-compatible source from a prescribed exact solution $u_{\rm ex}$, with $U^n(\boldsymbol{x})=u_{\rm ex}(\boldsymbol{x},t_n)$. The continuous source is obtained by substituting $u_{\rm ex}$ into the governing equation 
\[
g_{\rm cont}^n=
\begin{cases}
\partial_t u_{\rm ex}(t_n)-\varepsilon\Delta U^n
+\varepsilon^{-1}f(U^n),
&\text{Allen--Cahn},\\
\partial_t u_{\rm ex}(t_n)+\varepsilon\Delta^2 U^n
-\varepsilon^{-1}\Delta f(U^n),
&\text{Cahn--Hilliard}.
\end{cases}
\]
The discrete-compatible source is obtained by substituting $U^n$ and $U^{n+1}$ into the stabilized IMEX scheme
\[
g_{\rm disc}^n=
\begin{cases}
\displaystyle
\frac{U^{n+1}-U^n}{\tau}-\varepsilon\Delta U^{n+1}
+\varepsilon^{-1}f(U^n)
+\frac{s}{\varepsilon}(U^{n+1}-U^n),
&\text{Allen--Cahn},\\
\displaystyle
\frac{U^{n+1}-U^n}{\tau}+\varepsilon\Delta^2 U^{n+1}
-\varepsilon^{-1}\Delta f(U^n)
-\frac{s}{\varepsilon}\bigl(\Delta U^{n+1}-\Delta U^n\bigr),
&\text{Cahn--Hilliard}.
\end{cases}
\]
Here, the spatial derivatives are evaluated analytically. With $g_{\rm disc}^n$, the time-discrete, space-continuous scheme is satisfied exactly by $U^n$, so the temporal consistency defect is removed from the spatial tests. 

For the spatial convergence tests, the solution computed with $g_{\rm disc}^n$ is compared with $u_{\rm ex}(T)$ at the final time $T$. The errors and observed orders are defined by 
\[
E_q(h)=\|u_h(T)-u_{\rm ex}(T)\|_{q,h},
\qquad
\operatorname{ord}_q(h)
=\frac{\log E_q(2h)-\log E_q(h)}{\log 2},
\qquad q\in\{2,\infty\}.
\]

For the paired temporal tests,  two solutions, $u_{h,\tau}^{\rm cont}$ and $u_{h,\tau}^{\rm disc}$, are computed on a fixed grid for each $\tau$ using $g_{\rm cont}^n$ and $g_{\rm disc}^n$, respectively. The paired temporal-error indicator and its observed order are defined by 
\[
E_{q,{\rm pair}}(\tau)
=\|u_{h,\tau}^{\rm cont}(T)-u_{h,\tau}^{\rm disc}(T)\|_{q,h},
\qquad \operatorname{ord}_{q,{\rm pair}}(\tau)
=\frac{\log E_{q,{\rm pair}}(2\tau)-\log
E_{q,{\rm pair}}(\tau)}{\log 2},\qquad q\in\{2,\infty\}.
\]
The time-discrete, space-continuous solution driven by $g_{\rm cont}^n$ is denoted by $v_\tau(T)$. The two numerical solutions can be written as $u_{h,\tau}^{\rm cont}(T)=v_\tau(T)+e_h^{\rm cont}$ and $u_{h,\tau}^{\rm disc}(T)=u_{\rm ex}(T)+e_h^{\rm disc},$ where $e_h^{\rm cont}$ and $e_h^{\rm disc}$ are the spatial errors. Their difference is therefore $u_{h,\tau}^{\rm cont}(T)-u_{h,\tau}^{\rm disc}(T)
=
\bigl(v_\tau(T)-u_{\rm ex}(T)\bigr)
+
\bigl(e_h^{\rm cont}-e_h^{\rm disc}\bigr).$ 
Pairing is intended to reduce spatial-error contributions common to the two computations. The remaining difference, \(e_h^{\rm cont}-e_h^{\rm disc}\), need not vanish. When this difference is small relative to the temporal error over the tested time-step range, first-order convergence of the paired indicators provides numerical evidence consistent with the first-order IMEX discretization.

For the source-free simulations, the discrete energy is defined by
\[
E_h(u_h)=\sum_{i\in I_h}\omega_i
\left[
\frac{\varepsilon}{2}\left|\nabla_h^\Omega u_i\right|^2
+\frac{1}{4\varepsilon}(u_i^2-1)^2
\right].
\]
The discrete gradient \(\nabla_h^\Omega u_i\) is evaluated using physical grid nodes only. Centered differences are used in the interior, and inward one-sided differences are used near the irregular boundary to avoid differentiating the auxiliary extension outside the physical domain. 

The physical-domain mass and measure are defined by $M_h(v_h)=\sum_{i\in I_h}\omega_i v_{h,i}$ and $A_h=M_h(1)$. In the Cahn--Hilliard manufactured-solution tests, the projection target is set to $M_h^\star(t_{n+1})=M_h(U^{n+1})$ so that the sampled exact solution satisfies the discrete mass constraint. In the source-free tests, the initial mass $M_h^\star=M_h(u_h^0)$ is prescribed. For nonzero $M_h^\star$, the relative mass error is measured by
\[
e_M(v_h)=\frac{|M_h(v_h)-M_h^\star|}{|M_h^\star|}.
\]
The provisional and projected states are denoted by $\widetilde u_h^{\,n+1}$ and $u_h^{n+1}$. The energy change caused by each projection and its magnitude relative to the initial energy are defined by
\[
\Delta E_{\mathrm{proj}}^{n+1}=E_h(u_h^{n+1})-E_h(\widetilde u_h^{\,n+1}),\qquad
r_{\mathrm{proj}}^{n+1}=\frac{|\Delta E_{\mathrm{proj}}^{n+1}|}{E_h(u_h^0)}\times100\%.
\]
When the net energy decrease is positive, the cumulative perturbation is measured relative to this decrease
\[
R_{\mathrm{proj}}=\frac{\sum_{n=0}^{N_t-1}|\Delta E_{\mathrm{proj}}^{n+1}|}{E_h(u_h^0)-E_h(u_h^{N_t})}\times100\%,\qquad N_t=T/\tau.
\]
These measures are used in Examples~6 and~8. The gradient energy is unchanged by the constant correction, but the bulk energy may be increased or decreased. Energy dissipation is therefore checked numerically; it is not guaranteed by mass conservation alone.

\subsection{Geometric Quadrature Accuracy}

The geometric quadrature is checked on the rotated ellipse (\ref{eq:rotated-ellipse}) in Example 1 and ellipsoid (\ref{eq:ac-ellipsoid-3d}) in Example~3 by integrating the constant function $v_1=1$ and the polynomials $v_2=1+x^2+2y^2+0.1x$ and $v_3=1+x^2+2y^2+3z^2+0.1x$, respectively. Define the relative quadrature error and quadrature order by 
\[e_{\rm quad}(v;h)=
\frac{\left|M_h(v_h)-\int_\Omega v\,\mathrm{d}\boldsymbol{x}\right|}
{\left|\int_\Omega v\,\mathrm{d}\boldsymbol{x}\right|},\qquad
\operatorname{ord}_{\rm quad}(v;h)
=
\frac{\log\!\left(
e_{\rm quad}(v;2h)/e_{\rm quad}(v;h)
\right)}{\log 2}.
\] Second-order convergence is observed for both the domain measure and polynomial integrals in Table~\ref{tab:geometric-quadrature-accuracy}. All final physical-node weights are positive, and weights are zero on the exterior grid nodes.

\begin{table}[!htbp]
\centering
\small
\renewcommand{\arraystretch}{1.1}
\begin{tabular*}{\linewidth}
{@{\extracolsep{\fill}}crrrr@{}}
\hline
$N$ & $e_{\rm quad}(1;h)$ & $\operatorname{ord}_{\rm quad}(1;h)$
    & $e_{\rm quad}(v_d;h)$ & $\operatorname{ord}_{\rm quad}(v_d;h)$\\
\hline
\multicolumn{5}{c}{Two dimensions: rotated ellipse.}\\
128  & 1.11e-04 & --   & 1.99e-04 & --\\
256  & 2.86e-05 & 1.96 & 4.91e-05 & 2.02\\
512  & 7.11e-06 & 2.01 & 1.27e-05 & 1.95\\
1024 & 1.77e-06 & 2.01 & 3.26e-06 & 1.96\\
2048 & 4.45e-07 & 1.99 & 8.09e-07 & 2.01\\
\hline
\multicolumn{5}{c}{Three dimensions: ellipsoid.}\\
64   & 1.67e-03 & --   & 2.39e-03 & --\\
128  & 4.16e-04 & 2.00 & 5.95e-04 & 2.01\\
256  & 1.04e-04 & 2.00 & 1.51e-04 & 1.98\\
512  & 2.60e-05 & 2.00 & 3.78e-05 & 2.00\\
\hline
\end{tabular*}
\caption{Relative errors and observed orders of geometric quadrature.}
\label{tab:geometric-quadrature-accuracy}
\end{table}

\subsection{Examples for the Allen--Cahn Equation}
The following four examples examine the spatial accuracy, convergence of paired temporal-error indicators, phase evolution, and energy dissipation of the Allen--Cahn solver.


{\bf Example 1.}
This example considers the forced two-dimensional Allen--Cahn equation with a manufactured source. The rotated ellipse
\begin{equation}\label{eq:rotated-ellipse}
\begin{aligned}
\Omega &= \left\{(x,y):\frac{\xi^2}{a^2}+\frac{\eta^2}{b^2}<1\right\},\\
\xi &= x\cos\alpha+y\sin\alpha,\qquad
\eta=-x\sin\alpha+y\cos\alpha,\\
a&=0.8,\qquad b=0.5,\qquad \alpha=\pi/12,
\end{aligned}
\end{equation}
is embedded in $\mathcal{B}=[-1,1]^2$. The exact solution is prescribed as
\begin{equation*}\label{eq:ac-manufactured-solution}
u_{\rm ex}(x,y,t)=0.1+0.2\exp(-\beta t)\rho^4(x,y),
\end{equation*}
where $\beta=0.1/\varepsilon$ and $\rho(x,y)=1-\xi^2/a^2-\eta^2/b^2$. The initial condition is taken from the exact solution: $u^0(x,y)=0.1+0.2\rho^4(x,y)$.

For the spatial convergence test, one IMEX step is taken with the discrete-compatible source and $\tau=T=\varepsilon$. The grid is refined by successive doubling from $N=128$ to $N=2048$. Second-order convergence in both norms is observed for $\varepsilon=0.1$, $0.05$, and $0.025$ in Table~\ref{ac-spatial-conver-2d}.

\begin{table}[!htbp]
\centering
\small
\setlength{\tabcolsep}{5pt}
\renewcommand{\arraystretch}{1.1}
\begin{tabular*}{\linewidth}{@{\extracolsep{\fill}}crrrrr@{}}
\hline
$N$ & $E_\infty(h)$ & $\operatorname{ord}_\infty(h)$ & $E_2(h)$ & $\operatorname{ord}_2(h)$ & $\overline{k}_{\rm G}$\\
\hline
\multicolumn{6}{c}{$\varepsilon=0.1$}\\
128 & 3.43e-04 & -- & 4.48e-05 & -- & 10\\
256 & 1.06e-04 & 1.69 & 1.10e-05 & 2.02 & 10\\
512 & 1.95e-05 & 2.45 & 2.25e-06 & 2.29 & 9\\
1024 & 5.07e-06 & 1.94 & 5.45e-07 & 2.05 & 9\\
2048 & 1.10e-06 & 2.21 & 1.42e-07 & 1.94 & 8\\
\hline
\multicolumn{6}{c}{$\varepsilon=0.05$}\\
128 & 1.57e-03 & -- & 1.35e-04 & -- & 11\\
256 & 4.46e-04 & 1.82 & 3.06e-05 & 2.15 & 11\\
512 & 1.04e-04 & 2.10 & 6.48e-06 & 2.24 & 10\\
1024 & 2.35e-05 & 2.15 & 1.48e-06 & 2.13 & 10\\
2048 & 5.01e-06 & 2.23 & 3.95e-07 & 1.90 & 9\\
\hline
\multicolumn{6}{c}{$\varepsilon=0.025$}\\
128 & 7.94e-03 & -- & 4.58e-04 & -- & 14\\
256 & 1.81e-03 & 2.14 & 9.10e-05 & 2.33 & 12\\
512 & 5.08e-04 & 1.83 & 1.96e-05 & 2.21 & 11\\
1024 & 1.07e-04 & 2.24 & 4.25e-06 & 2.21 & 11\\
2048 & 2.30e-05 & 2.22 & 1.14e-06 & 1.90 & 10\\
\hline
\end{tabular*}
\caption{Spatial convergence results for the forced two-dimensional Allen--Cahn equation in Example 1.}
\label{ac-spatial-conver-2d}
\end{table}

For the temporal convergence test with $N=1024$ and $T=\varepsilon$, the time step is halved with $\tau/\varepsilon=1/20,1/40,1/80$, and $1/160$. For each time-step size, two runs using the two source terms are performed and compared at the final time. First-order convergence of the paired indicator is observed in Table~\ref{ac-tem-conver-2d}.
\begin{table}[!htbp]
\centering
\small
\setlength{\tabcolsep}{5pt}
\renewcommand{\arraystretch}{1.1}
\begin{tabular*}{\linewidth}{@{\extracolsep{\fill}}crrrrr@{}}
\hline
$\tau/\varepsilon$ & $E_{\infty,{\rm pair}}(\tau)$ & $\operatorname{ord}_{\infty,{\rm pair}}(\tau)$ & $E_{2,{\rm pair}}(\tau)$ & $\operatorname{ord}_{2,{\rm pair}}(\tau)$ & $\overline{k}_{\rm G}^{\rm cont}/\overline{k}_{\rm G}^{\rm disc}$\\
\hline
\multicolumn{6}{c}{$\varepsilon=0.1$}\\
$1/20$ & 2.34e-03 & -- & 9.10e-04 & -- & 10/10\\
$1/40$ & 1.26e-03 & 0.90 & 4.91e-04 & 0.89 & 11/11\\
$1/80$ & 6.51e-04 & 0.95 & 2.55e-04 & 0.94 & 11/11\\
$1/160$ & 3.32e-04 & 0.97 & 1.30e-04 & 0.97 & 12/12\\
\hline
\multicolumn{6}{c}{$\varepsilon=0.05$}\\
$1/20$ & 2.38e-03 & -- & 9.23e-04 & -- & 11/11\\
$1/40$ & 1.28e-03 & 0.89 & 4.99e-04 & 0.89 & 12/12\\
$1/80$ & 6.67e-04 & 0.94 & 2.60e-04 & 0.94 & 12/12\\
$1/160$ & 3.40e-04 & 0.97 & 1.33e-04 & 0.97 & 13/13\\
\hline
\multicolumn{6}{c}{$\varepsilon=0.025$}\\
$1/20$ & 2.38e-03 & -- & 9.25e-04 & -- & 12/12\\
$1/40$ & 1.28e-03 & 0.89 & 5.01e-04 & 0.89 & 13/13\\
$1/80$ & 6.68e-04 & 0.94 & 2.61e-04 & 0.94 & 14/14\\
$1/160$ & 3.41e-04 & 0.97 & 1.33e-04 & 0.97 & 15/15\\
\hline
\end{tabular*}
\caption{Paired temporal convergence results for the forced two-dimensional Allen--Cahn
equation in Example~1.}
\label{ac-tem-conver-2d}
\end{table}

{\bf Example 2.}
Phase evolution and energy dissipation are examined for the source-free Allen--Cahn equation ($g_{\rm AC}=0$). The rotated five-fold star-shaped domain is defined by
\begin{equation}\label{eq:rotated-star-domain}
\begin{aligned}
\Omega &= \left\{(r\cos\theta,r\sin\theta):
0\leq r<R(\theta),\quad 0\leq\theta<2\pi\right\},\\
R(\theta) &= 0.65+0.20\cos\left(5\left(\theta-\frac{\pi}{10}\right)\right).
\end{aligned}
\end{equation}
The domain is embedded in $\mathcal{B}=[-1,1]^2$, and the initial condition is prescribed as
\begin{equation*}\label{eq:ac-compact-initial}
u^0(x,y)=
\begin{cases}
\displaystyle \frac14\sin^2(4\pi x)\sin^2(4\pi y),
& |x|\leq 0.25,\ |y|\leq 0.25,\\[1mm]
0,&\text{otherwise}.
\end{cases}
\end{equation*}
The phase evolution computed with $N=512$, $\tau=0.01$, $T=1$, and $\varepsilon=0.1$ is shown in Figure~\ref{fig:ac-compact-evolution-2d}. Normalized energy decay is shown in Figure~\ref{fig:ac-normalized-energy-2d} for a $512\times512$ grid with $\tau=0.01$ and $T=1$. Spatial convergence of the total energy is assessed in Figure~\ref{fig:ac-total-energy-convergence-2d}. The energy histories for $N=128,256,512$, and $1024$ are compared with a reference computed at $N_{\rm ref}=2048$, with $\tau=0.05$ and $T=1$ fixed for all grids.
\begin{figure}[!ht]
  \centering
  \includegraphics[width=0.75\linewidth]{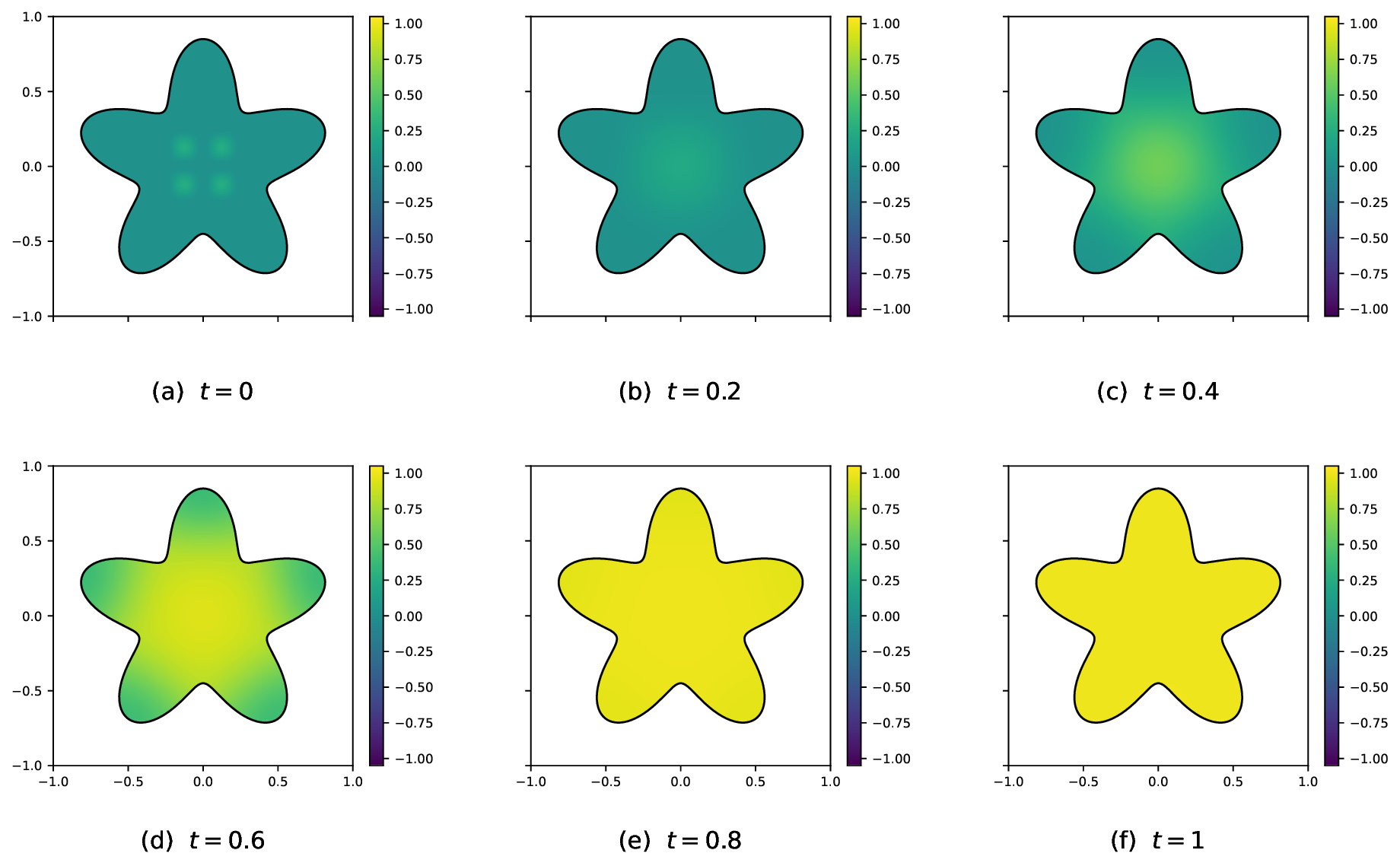}
  \caption{Source-free two-dimensional Allen--Cahn phase evolution in Example 2.}
  \label{fig:ac-compact-evolution-2d}
\end{figure}

\begin{figure}[!ht]
  \centering
  \subfigure[Energy dissipation for different interfacial parameters.\label{fig:ac-normalized-energy-2d}]{
    \includegraphics[hiresbb,trim={-7.920000bp 16.662857bp 23.348571bp 13.371429bp},clip,width=0.46\textwidth,height=0.32\textwidth]{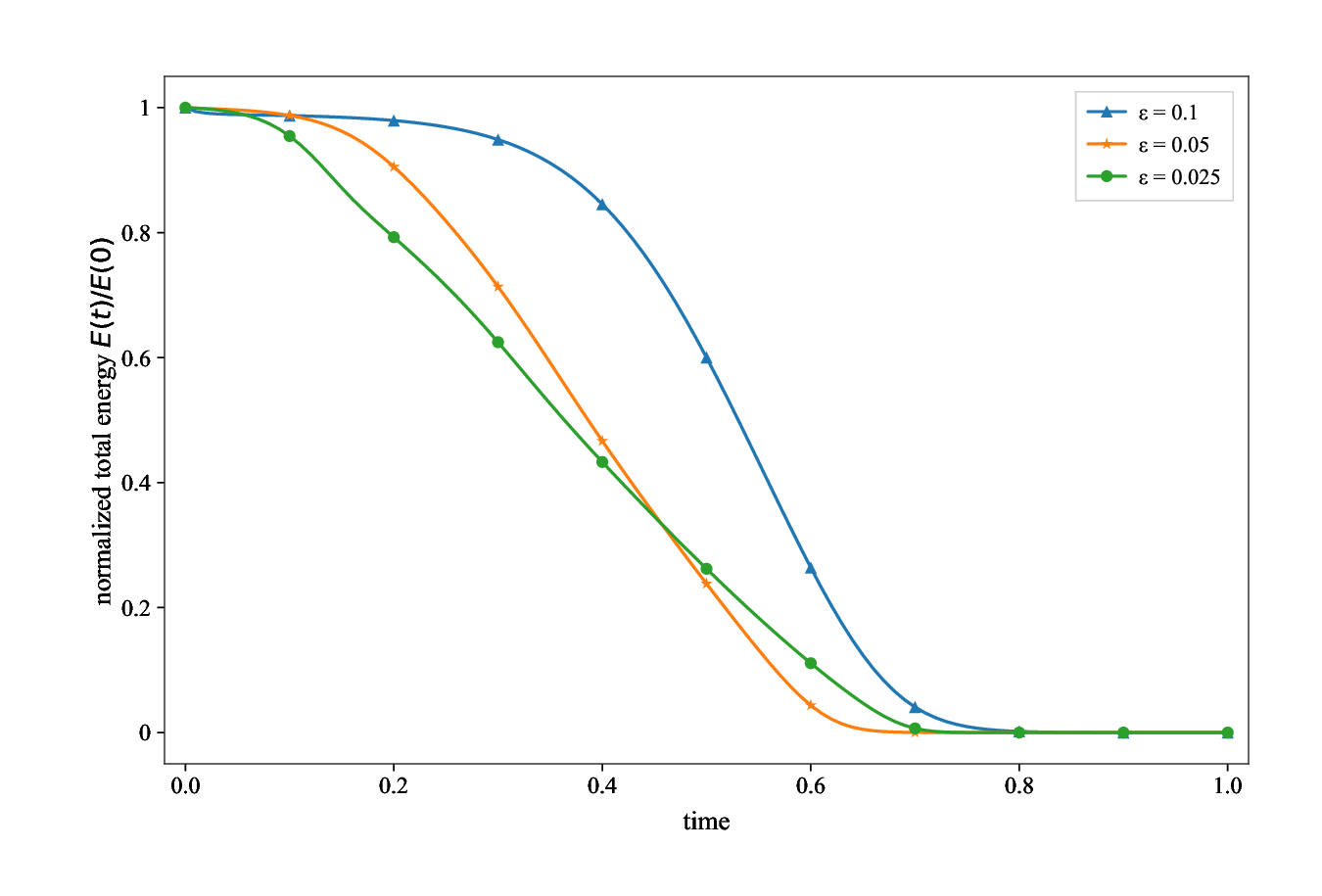}
  }\hspace{0.03\textwidth}%
  \subfigure[Second-order spatial convergence of the total energy (log-log plot).
  \label{fig:ac-total-energy-convergence-2d}]{
    \includegraphics[hiresbb,trim={-9.648763bp 4.733714bp 1.419413bp -9.829143bp},clip,width=0.46\textwidth,height=0.32\textwidth]{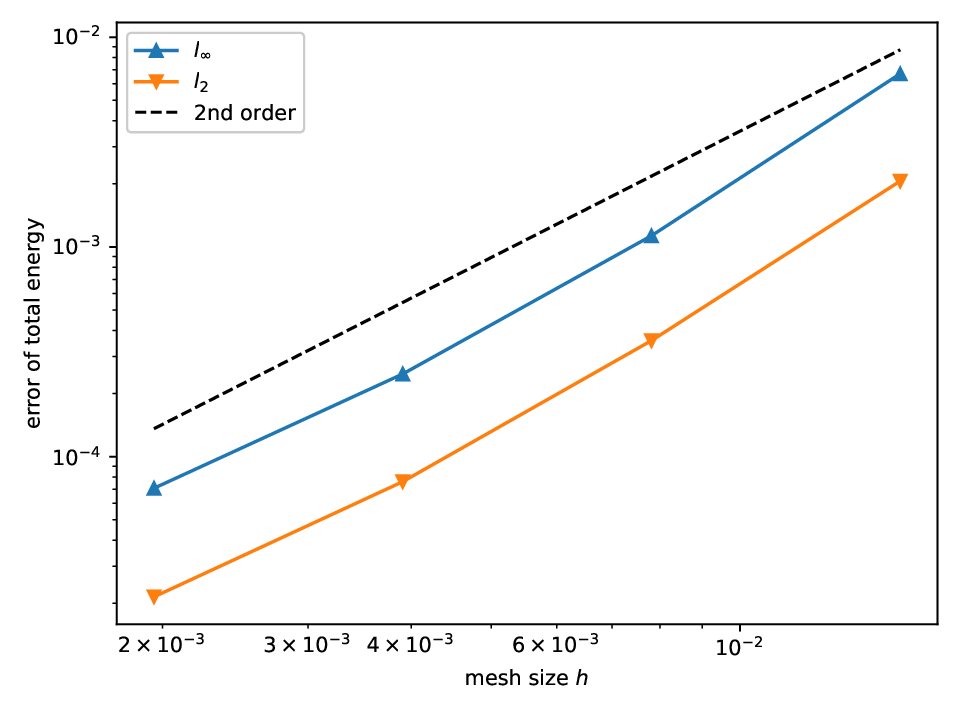}
  }
  \caption{Energy dissipation and spatial convergence of the total
  energy for the source-free two-dimensional Allen--Cahn equation in Example 2.}
  \label{fig:ac-energy-results-2d}
\end{figure}


{\bf Example 3.} 
This example examines the forced three-dimensional Allen--Cahn equation. The ellipsoid
\begin{equation}\label{eq:ac-ellipsoid-3d}
\begin{aligned}
\Omega &= \left\{(x,y,z):
\frac{x^2}{a^2}+\frac{y^2}{b^2}+\frac{z^2}{c^2}<1\right\},\\
a&=0.742,\qquad b=0.558,\qquad c=0.412,
\end{aligned}
\end{equation}
is embedded in $\mathcal{B}=[-1,1]^3$. The exact solution is prescribed as
\begin{equation*}\label{eq:ac-manufactured-solution-3d}
u_{\rm ex}(x,y,z,t)
=0.1+0.2\exp(-\beta t)\rho^4(x,y,z),
\end{equation*}
where $\beta=0.1/\varepsilon$ and $\rho(x,y,z)=1-\frac{x^2}{a^2}-\frac{y^2}{b^2}-\frac{z^2}{c^2}.$ The initial condition is $u^0(x,y,z)=0.1+0.2\rho^4(x,y,z)$.

For the spatial convergence test, the discrete-compatible source is evaluated using the analytic Laplacian of $u_{\rm ex}$. One IMEX step is taken with $\tau=T=\varepsilon$ for $\varepsilon=0.1$, $0.075$, and $0.05$ on grids with $N=64,128,256,512$. Second-order convergence in both norms is observed for all three values of $\varepsilon$ in Table~\ref{ac-spatial-conver-3d}.

\begin{table}[!ht]
\centering
\small
\setlength{\tabcolsep}{5pt}
\renewcommand{\arraystretch}{1.1}
\begin{tabular*}{\linewidth}{@{\extracolsep{\fill}}crrrrr@{}}
\hline
$N$ & $E_\infty(h)$ & $\operatorname{ord}_\infty(h)$ & $E_2(h)$ & $\operatorname{ord}_2(h)$ & $\overline{k}_{\rm G}$\\
\hline
\multicolumn{6}{c}{$\varepsilon=0.1$}\\
64 & 2.76e-03 & -- & 2.06e-04 & -- & 16\\
128 & 5.28e-04 & 2.38 & 4.57e-05 & 2.17 & 17\\
256 & 1.06e-04 & 2.31 & 1.10e-05 & 2.06 & 14\\
512 & 2.58e-05 & 2.04 & 2.71e-06 & 2.02 & 14\\
\hline
\multicolumn{6}{c}{$\varepsilon=0.075$}\\
64 & 5.02e-03 & -- & 3.17e-04 & -- & 17\\
128 & 9.39e-04 & 2.42 & 6.90e-05 & 2.20 & 17\\
256 & 1.94e-04 & 2.27 & 1.64e-05 & 2.07 & 15\\
512 & 4.74e-05 & 2.03 & 4.05e-06 & 2.02 & 15\\
\hline
\multicolumn{6}{c}{$\varepsilon=0.05$}\\
64 & 1.37e-02 & -- & 6.40e-04 & -- & 20\\
128 & 2.13e-03 & 2.68 & 1.29e-04 & 2.31 & 19\\
256 & 4.59e-04 & 2.22 & 3.01e-05 & 2.10 & 16\\
512 & 1.16e-04 & 1.99 & 7.40e-06 & 2.02 & 16\\
\hline
\end{tabular*}
\caption{Spatial convergence results for the forced three-dimensional Allen--Cahn equation in Example~3.}
\label{ac-spatial-conver-3d}
\end{table}

For the temporal convergence test with $N=512$ and $T=\varepsilon$, the time step is halved with $\tau/\varepsilon=1/20,1/40,1/80$, and $1/160$. Each pair of computations is initialized with the same exact values. First-order convergence of the paired indicator is observed in Table~\ref{ac-tem-conver-3d}.

\begin{table}[!ht]
\centering\small
\setlength{\tabcolsep}{4pt}
\renewcommand{\arraystretch}{1.1}
\begin{tabular*}{\linewidth}{@{\extracolsep{\fill}}crrrrr@{}}
\hline
$\tau/\varepsilon$ & $E_{\infty,{\rm pair}}(\tau)$ & $\operatorname{ord}_{\infty,{\rm pair}}(\tau)$ & $E_{2,{\rm pair}}(\tau)$ & $\operatorname{ord}_{2,{\rm pair}}(\tau)$ & $\overline{k}_{\rm G}^{\rm cont}/\overline{k}_{\rm G}^{\rm disc}$\\
\hline
\multicolumn{6}{c}{$\varepsilon=0.1$}\\
$1/20$ & 2.27e-03 & -- & 4.41e-04 & -- & 17/17\\
$1/40$ & 1.21e-03 & 0.91 & 2.37e-04 & 0.90 & 18/18\\
$1/80$ & 6.24e-04 & 0.95 & 1.23e-04 & 0.95 & 19/19\\
$1/160$ & 3.17e-04 & 0.98 & 6.27e-05 & 0.97 & 20/20\\
\hline
\multicolumn{6}{c}{$\varepsilon=0.075$}\\
$1/20$ & 2.34e-03 & -- & 4.52e-04 & -- & 17/17\\
$1/40$ & 1.25e-03 & 0.90 & 2.44e-04 & 0.89 & 18/18\\
$1/80$ & 6.51e-04 & 0.95 & 1.27e-04 & 0.94 & 19/19\\
$1/160$ & 3.32e-04 & 0.97 & 6.47e-05 & 0.97 & 21/21\\
\hline
\multicolumn{6}{c}{$\varepsilon=0.05$}\\
$1/20$ & 2.38e-03 & -- & 4.58e-04 & -- & 19/19\\
$1/40$ & 1.28e-03 & 0.89 & 2.48e-04 & 0.89 & 20/20\\
$1/80$ & 6.64e-04 & 0.94 & 1.29e-04 & 0.94 & 21/21\\
$1/160$ & 3.39e-04 & 0.97 & 6.60e-05 & 0.97 & 24/24\\
\hline
\end{tabular*}
\caption{Paired temporal convergence results for the forced three-dimensional Allen--Cahn equation in Example~3.}
\label{ac-tem-conver-3d}
\end{table}


{\bf Example 4.} 
The source-free Allen--Cahn equation ($g_{\rm AC}=0$) is solved on the three-dimensional toroidal domain
\begin{equation}\label{eq:torus-domain-3d}
\Omega=\left\{(x,y,z):
\left(\sqrt{x^2+y^2}-R\right)^2+z^2<r^2\right\},
\qquad R=0.65,\quad r=0.25.
\end{equation}
The embedding box is $\mathcal B=[-1,1]^3$, and the initial value is defined by 
\begin{equation*}
u^0(x,y,z)=
\begin{cases}
\displaystyle \frac14
\sin^2\!\left(\frac{\pi\xi}{a}\right)
\sin^2\!\left(\frac{\pi z}{a}\right),
& |\xi|\leq a,\ |z|\leq a,\\[1mm]
0,&\text{otherwise},
\end{cases}
\end{equation*}
where  $\xi=\sqrt{x^2+y^2}-R$ and $a=0.15$.

The phase evolution is computed with $N=256$, $\varepsilon=0.1$, $\tau=0.01$, and $T=1$. Horizontal sections $u(x,y,0.075,t)$ are shown in Figure~\ref{fig:ac-compact-evolution-3d}. The section values are interpolated from the three-dimensional snapshots, and a common color range $[0,1.01]$ is used. The initial radial peaks are smoothed, and a nearly uniform positive phase $u=1$ is approached by $T=1$.

\begin{figure}[!ht]
  \centering
  \includegraphics[width=0.75\linewidth]{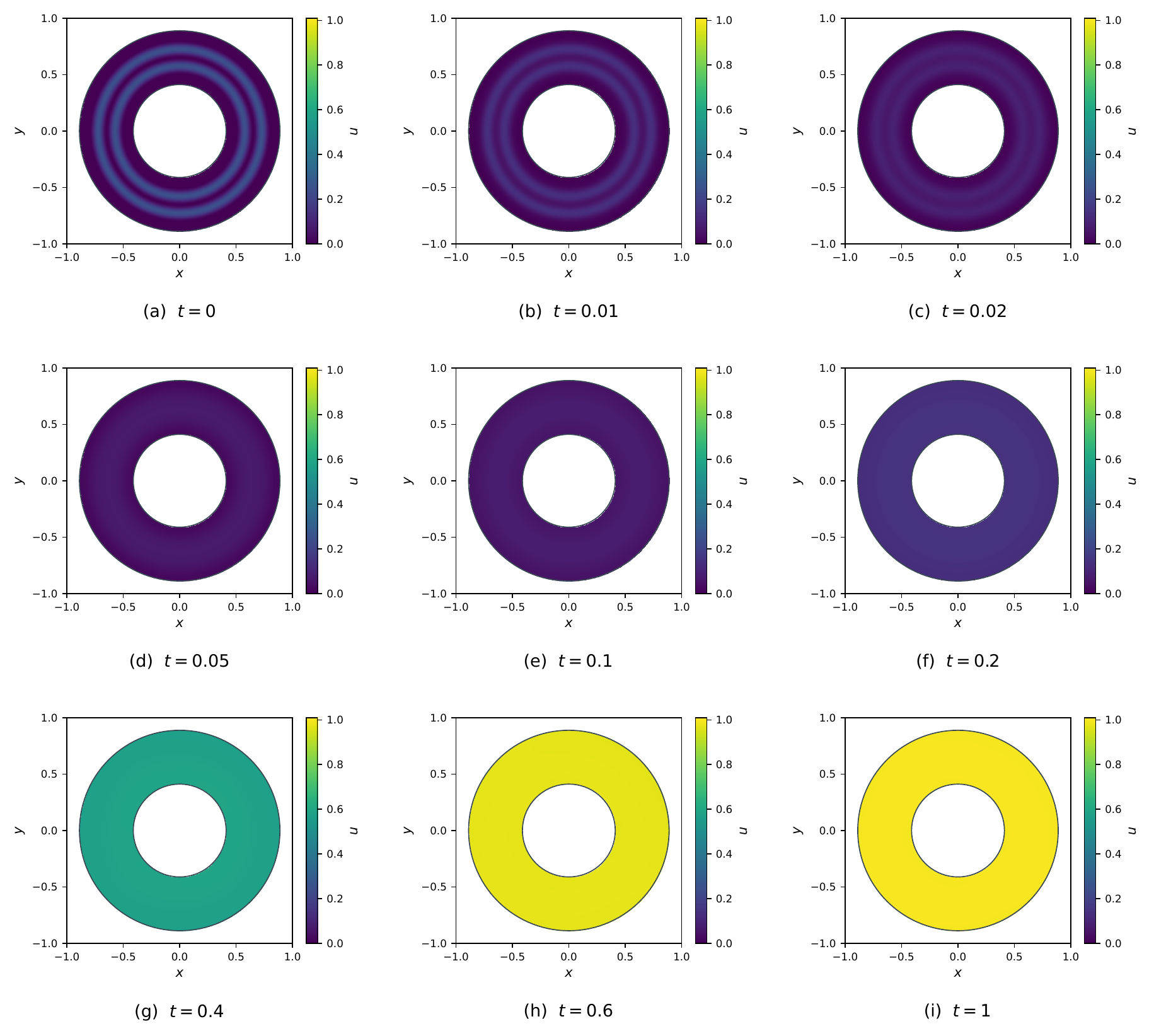}
  \caption{Source-free three-dimensional Allen--Cahn phase evolution in Example~4.}
  \label{fig:ac-compact-evolution-3d}
\end{figure}
 
The dissipation of $E_h(t_n)/E_h(0)$ is shown in Figure~\ref{fig:ac-normalized-energy-3d} for a $512^3$ grid with $\tau=0.01$, $T=1$, and $\varepsilon=0.1$, $0.075$, $0.05$. For spatial convergence of the total energy, the histories for $N=64,128,256$ are compared with a reference computed at $N_{\rm ref}=512$, with $\varepsilon=0.1$, $\tau=0.05$, and $T=1$ fixed. Second-order convergence is observed in Figure~\ref{fig:ac-total-energy-convergence-3d}.

\begin{figure}[!ht]
  \centering
  \subfigure[Energy dissipation for different interfacial parameters.
  \label{fig:ac-normalized-energy-3d}]{
    \includegraphics[hiresbb,trim={-7.920000bp 16.662857bp 23.348571bp 13.371429bp},clip,width=0.46\textwidth,height=0.32\textwidth]{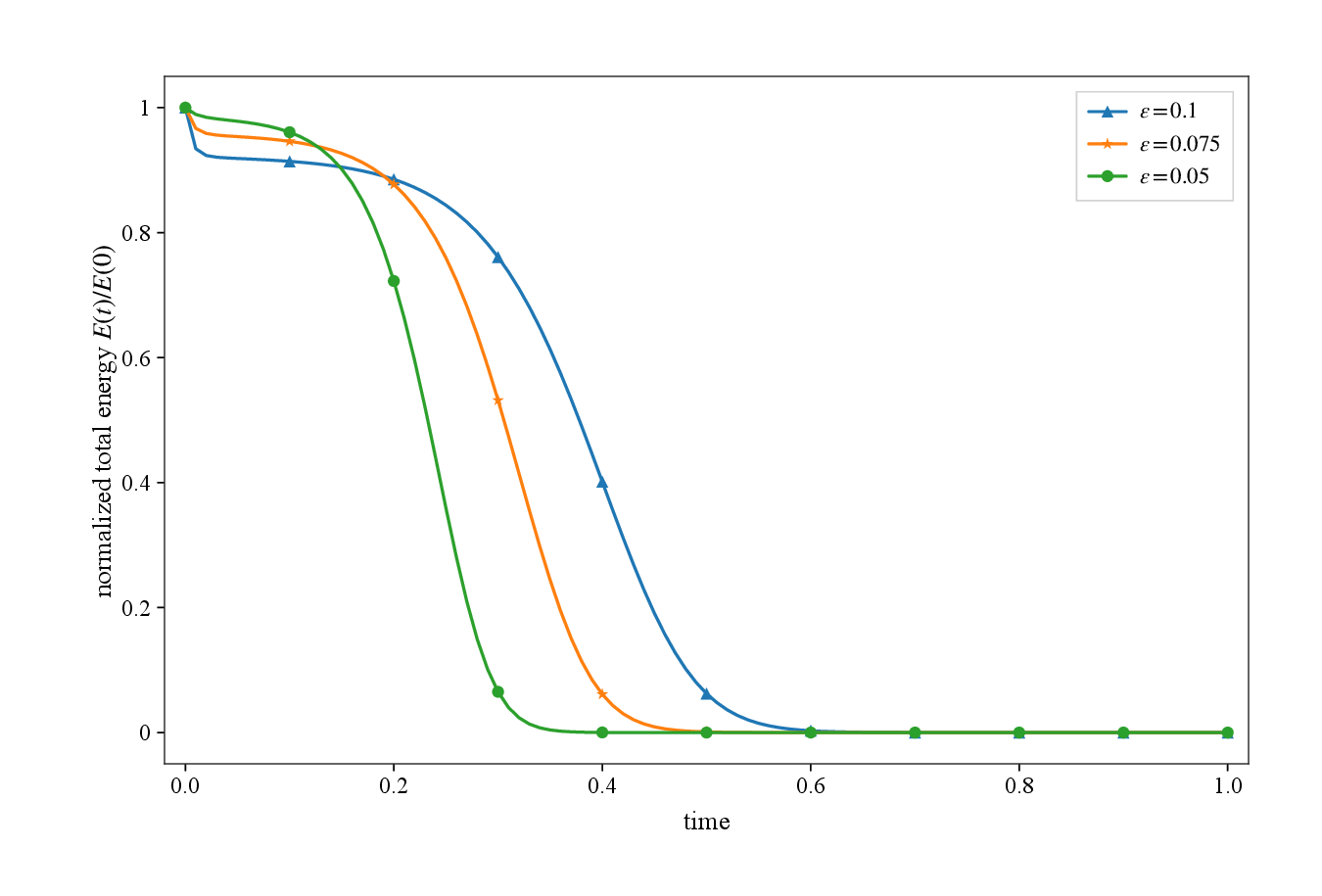}
  }\hspace{0.03\textwidth}%
  \subfigure[Second-order spatial convergence of the total energy (log-log plot).
  \label{fig:ac-total-energy-convergence-3d}]{
    \includegraphics[hiresbb,trim={-8.989007bp 10.223357bp 3.085939bp -7.764790bp},clip,width=0.46\textwidth,height=0.32\textwidth]{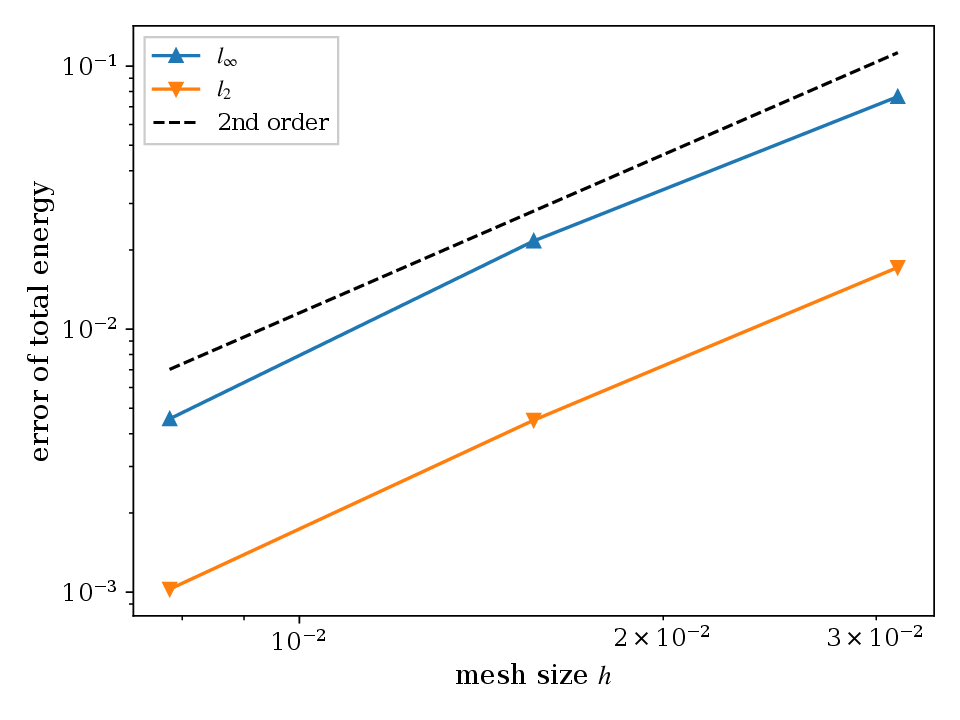}
  }
  \caption{Energy dissipation and spatial convergence of the total
  energy for the source-free three-dimensional Allen--Cahn equation in Example~4.}
  \label{fig:ac-energy-results-3d}
\end{figure}


\subsection{Examples for the Cahn--Hilliard Equation}
The mass-projected Cahn--Hilliard solver is assessed below. Spatial accuracy and convergence of paired temporal-error indicators are tested in Examples~5 and~7. Phase evolution, energy dissipation, mass conservation, and the energy perturbation caused by projection are examined in Examples~6 and~8.

{\bf Example 5.}
This example considers the forced two-dimensional Cahn--Hilliard equation with a manufactured solution. A source $g_{\rm CH}$ is added to the first equation in~\eqref{cahn-hilliard}. The rotated ellipse~\eqref{eq:rotated-ellipse} is embedded in $\mathcal B=[-1,1]^2$, and the exact solution is prescribed as
\begin{equation*}
u_{\rm ex}(x,y,t)
=0.1+0.2\exp(-\beta t)\left(\rho^4(x,y)-\frac15\right),
\end{equation*}
where $\rho(x,y)=1-\frac{\xi^2}{a^2}-\frac{\eta^2}{b^2}$ and $\beta=\frac{0.1}{\varepsilon}$. The initial value is given by $u^0(x,y)=0.1+0.2(\rho^4(x,y)-1/5)$. 

For the spatial convergence test, one IMEX step is taken with the discrete-compatible source and $\tau=T=\varepsilon$. The grids are refined from $N=128$ to $N=2048$ for $\varepsilon=0.1$, $0.05$, and $0.025$. Two positive real shifts are obtained for these parameters. Second-order convergence in both norms is observed in Table~\ref{tab:ch-manufactured-space-2d}.

\begin{table}[!ht]
\centering
\small
\setlength{\tabcolsep}{5pt}
\renewcommand{\arraystretch}{1.1}
\begin{tabular*}{\linewidth}{@{\extracolsep{\fill}}crrrrr@{}}
\hline
$N$ & $E_\infty(h)$ & $\operatorname{ord}_\infty(h)$ & $E_2(h)$ & $\operatorname{ord}_2(h)$ & $\overline{k}_{\rm G}^{\rm s}/\overline{k}_{\rm G}^{\rm l}$\\
\hline
\multicolumn{6}{c}{$\varepsilon=0.1$}\\
128 & 2.43e-04 & -- & 8.20e-05 & -- & 12/10\\
256 & 6.70e-05 & 1.86 & 2.23e-05 & 1.88 & 12/9\\
512 & 1.62e-05 & 2.05 & 5.21e-06 & 2.10 & 12/9\\
1024 & 4.06e-06 & 2.00 & 1.36e-06 & 1.94 & 11/8\\
2048 & 1.01e-06 & 2.01 & 3.44e-07 & 1.98 & 10/7\\
\hline
\multicolumn{6}{c}{$\varepsilon=0.05$}\\
128 & 6.29e-04 & -- & 7.07e-05 & -- & 12/11\\
256 & 1.82e-04 & 1.79 & 1.66e-05 & 2.09 & 12/10\\
512 & 4.08e-05 & 2.16 & 3.65e-06 & 2.19 & 12/10\\
1024 & 9.29e-06 & 2.14 & 8.78e-07 & 2.06 & 11/9\\
2048 & 1.96e-06 & 2.24 & 2.28e-07 & 1.95 & 10/8\\
\hline
\multicolumn{6}{c}{$\varepsilon=0.025$}\\
128 & 3.10e-03 & -- & 2.02e-04 & -- & 12/13\\
256 & 7.67e-04 & 2.01 & 4.23e-05 & 2.25 & 12/11\\
512 & 2.05e-04 & 1.90 & 9.17e-06 & 2.21 & 12/11\\
1024 & 4.38e-05 & 2.23 & 2.03e-06 & 2.18 & 11/10\\
2048 & 9.28e-06 & 2.24 & 5.41e-07 & 1.91 & 10/9\\
\hline
\end{tabular*}
\caption{Spatial convergence results for the forced two-dimensional Cahn--Hilliard
equation in Example 5.}
\label{tab:ch-manufactured-space-2d}
\end{table}

For the temporal convergence test, $N=1024$ and $T=1.25\varepsilon^3$ are fixed, and $\tau=\frac{T}{16},\frac{T}{32},\frac{T}{64},\frac{T}{128}$ are used. Since $\tau<\varepsilon^3$ and $s=2$, complex-conjugate shifts are obtained in all cases. First-order convergence of the paired indicator is observed in Table~\ref{tab:ch-manufactured-time-2d}.

\begin{table}[!ht]
\centering
\small
\setlength{\tabcolsep}{5pt}
\renewcommand{\arraystretch}{1.1}
\begin{tabular*}{\linewidth}{@{\extracolsep{\fill}}crrrrr@{}}
\hline
$\tau/T$ & $E_{\infty,{\rm pair}}(\tau)$ & $\operatorname{ord}_{\infty,{\rm pair}}(\tau)$ & $E_{2,{\rm pair}}(\tau)$ & $\operatorname{ord}_{2,{\rm pair}}(\tau)$ & $\overline{k}_{\rm G}^{\rm cont}/\overline{k}_{\rm G}^{\rm disc}$\\
\hline
\multicolumn{6}{c}{$\varepsilon=0.1$}\\
$1/16$ & 1.94e-05 & -- & 6.66e-06 & -- & 9/9\\
$1/32$ & 1.03e-05 & 0.92 & 3.52e-06 & 0.92 & 9/9\\
$1/64$ & 5.29e-06 & 0.96 & 1.81e-06 & 0.96 & 9/9\\
$1/128$ & 2.69e-06 & 0.98 & 9.17e-07 & 0.98 & 9/9\\
\hline
\multicolumn{6}{c}{$\varepsilon=0.05$}\\
$1/16$ & 1.21e-06 & -- & 4.27e-07 & -- & 9/9\\
$1/32$ & 6.14e-07 & 0.98 & 2.19e-07 & 0.96 & 9.03/9.03\\
$1/64$ & 3.09e-07 & 0.99 & 1.11e-07 & 0.98 & 10/10\\
$1/128$ & 1.55e-07 & 0.99 & 5.61e-08 & 0.99 & 10/10\\
\hline
\multicolumn{6}{c}{$\varepsilon=0.025$}\\
$1/16$ & 7.00e-08 & -- & 2.44e-08 & -- & 10/10\\
$1/32$ & 3.51e-08 & 0.99 & 1.23e-08 & 0.99 & 10/10\\
$1/64$ & 1.76e-08 & 1.00 & 6.17e-09 & 1.00 & 10/10\\
$1/128$ & 8.81e-09 & 1.00 & 3.09e-09 & 1.00 & 11/11\\
\hline
\end{tabular*}
\caption{Paired temporal convergence results for the forced two-dimensional Cahn--Hilliard
problem in Example 5.}
\label{tab:ch-manufactured-time-2d}
\end{table}


{\bf Example 6.} 
The source-free Cahn--Hilliard equation ($g_{\rm CH}=0$) is solved on the rotated five-fold star-shaped domain~\eqref{eq:rotated-star-domain} in $\mathcal B=[-1,1]^2$. The initial condition is prescribed as
\begin{equation*}\label{eq:ch-compact-initial}
u^0(x,y)=
\begin{cases}
\displaystyle \frac14\sin^4(4\pi x)\sin^4(4\pi y),
& |x|\leq0.25,\ |y|\leq0.25,\\[1mm]
0,&\text{otherwise}.
\end{cases}
\end{equation*}

The phase evolution computed on a $512\times512$ grid with $\varepsilon=0.1$, $\tau=0.00125$, and $T=1$ is shown in Figure~\ref{fig:combined-ch-evolution-2d}.

\begin{figure}[!ht]
  \centering
  \includegraphics[width=0.75\linewidth]{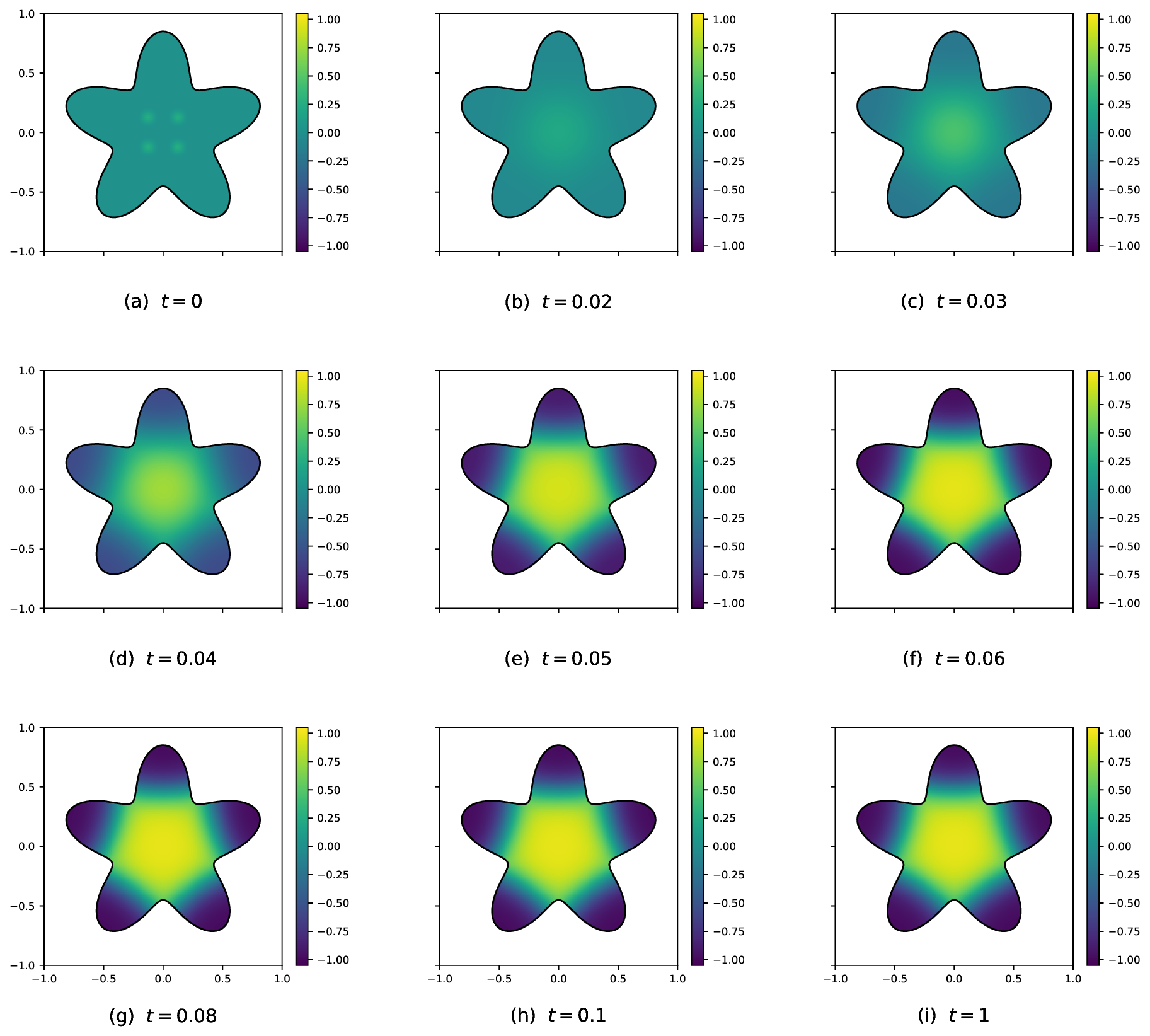}
  \caption{Source-free two-dimensional Cahn--Hilliard phase evolution in Example 6.}
  \label{fig:combined-ch-evolution-2d}
\end{figure}

Energy dissipation is observed in Figure~\ref{fig:combined-ch-normalized-energy-2d} for $N=512$, $\tau=0.005$, $T=1$, and $\varepsilon=0.1$, $0.05$, $0.025$. For spatial convergence of the total energy, $\varepsilon=0.1$, $\tau=0.001$, and $T=0.02$ are fixed. The results for $N=128,256,512,1024$ are compared with a reference at $N_{\rm ref}=2048$. Second-order convergence is observed in Figure~\ref{fig:combined-ch-energy-convergence-2d}.

Mass conservation is tested separately with $N=512$, $\varepsilon=0.1$, $\tau=0.00125$, and $T=1$. Two otherwise identical runs are performed with and without projection. The relative mass errors after projection, before each projection, and in the independent unprojected run are compared in Figure~\ref{fig:combined-ch-mass-conservation}. Maximum errors of $2.940857\times10^{-13}$, $1.223756\times10^{-3}$, and $9.308733\times10^{-1}$ are obtained, respectively. The prescribed mass is thus restored to near machine precision by projection at every step.

The energy perturbation caused by projection is measured in the energy-dissipation runs ($N=512$, $\tau=0.005$, $T=1$). The single-step ratios $r_{\mathrm{proj}}^{n+1}$ are shown in Figure~\ref{fig:combined-ch-projection-energy}; a maximum of $6.903\times10^{-4}\%$ is attained at $\varepsilon=0.05$. Cumulative ratios $R_{\mathrm{proj}}$ of $4.920\times10^{-2}\%$, $3.847\times10^{-2}\%$, and $2.793\times10^{-2}\%$ are obtained for $\varepsilon=0.1$, $0.05$, and $0.025$, respectively. Only a small energy perturbation relative to the observed dissipation is introduced in these tests.

\begin{figure}[!ht]
  \centering
  \makebox[\textwidth][c]{%
    \subfigure[Energy dissipation for different interfacial parameters.\label{fig:combined-ch-normalized-energy-2d}]{
      \includegraphics[hiresbb,trim={-7.920000bp 16.662857bp 23.348571bp 13.371429bp},clip,width=0.35\textwidth,height=0.245\textwidth]{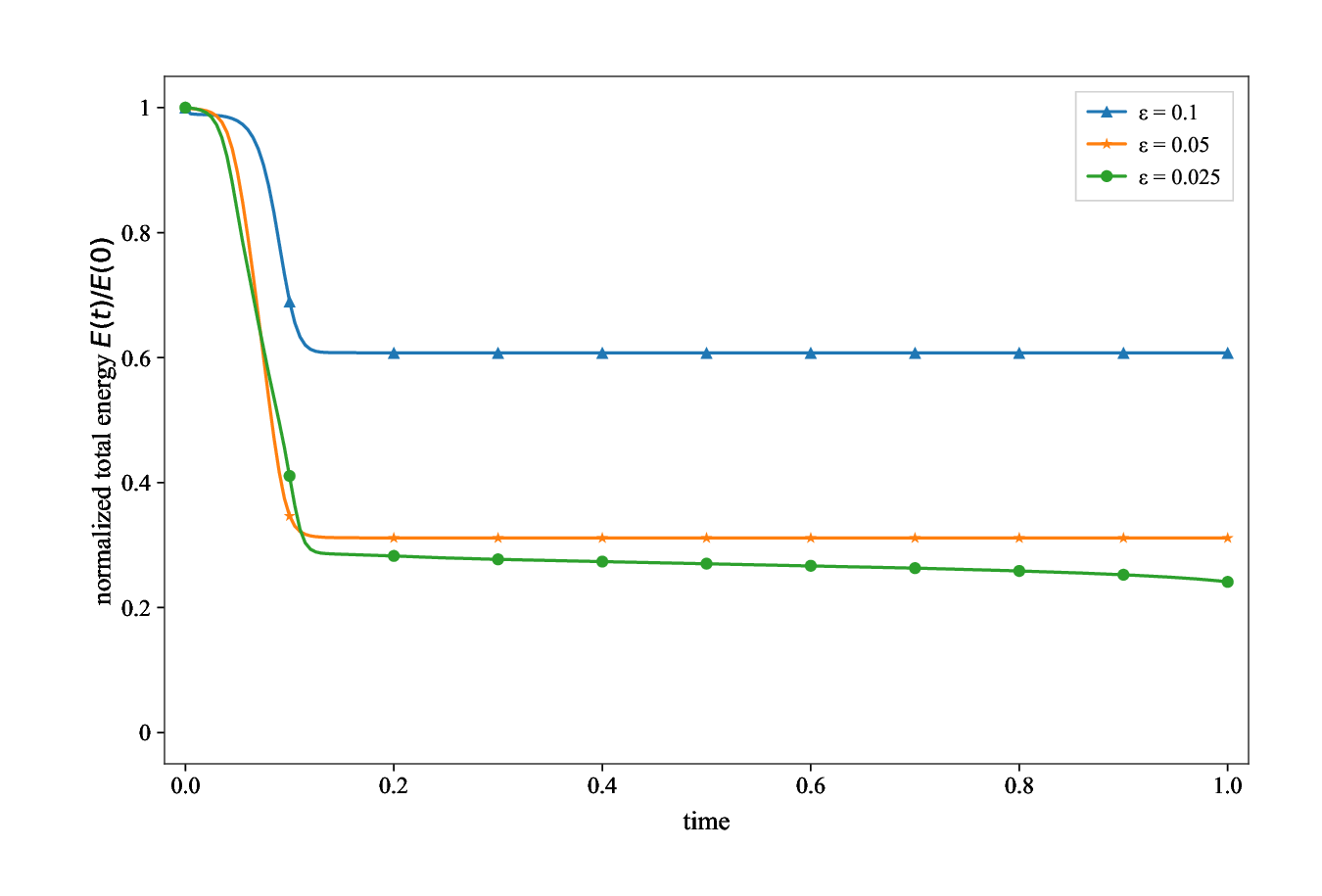}
    }\hspace{0.04\textwidth}%
    \subfigure[Second-order spatial convergence of the total energy (log-log plot).\label{fig:combined-ch-energy-convergence-2d}]{
      \includegraphics[hiresbb,trim={-9.648763bp 4.733714bp 1.419413bp -9.829143bp},clip,width=0.35\textwidth,height=0.245\textwidth]{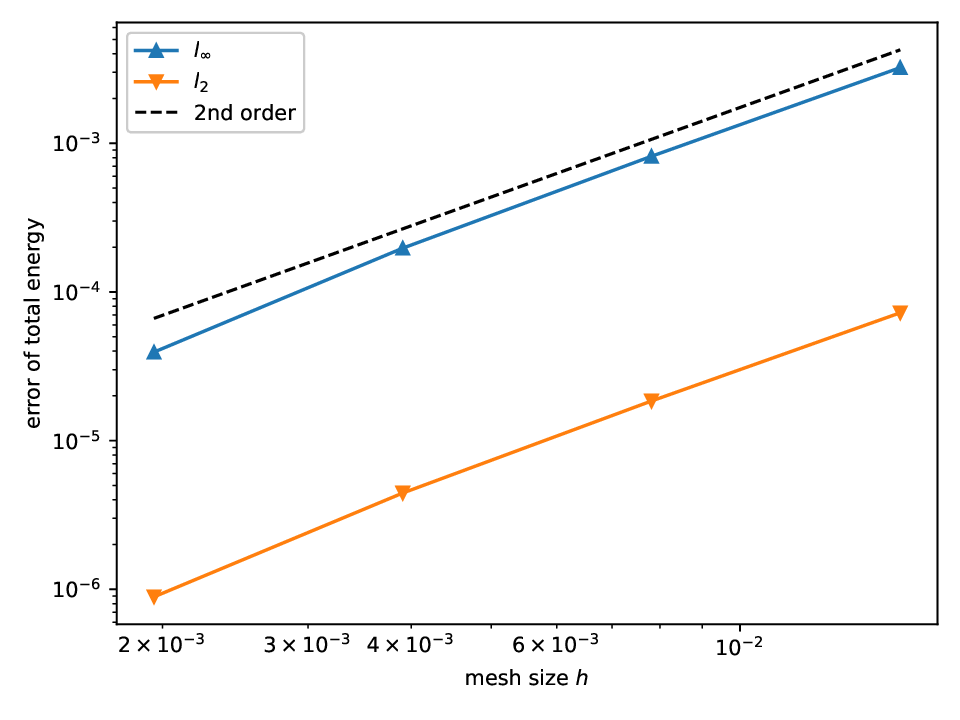}
    }%
  }

  \par\bigskip

  \makebox[\textwidth][c]{%
    \subfigure[Relative mass errors after projection, before projection, and without projection.\label{fig:combined-ch-mass-conservation}]{
      \includegraphics[hiresbb,trim={-7.272000bp 8.310857bp 17.660571bp -6.752571bp},clip,width=0.35\textwidth,height=0.245\textwidth]{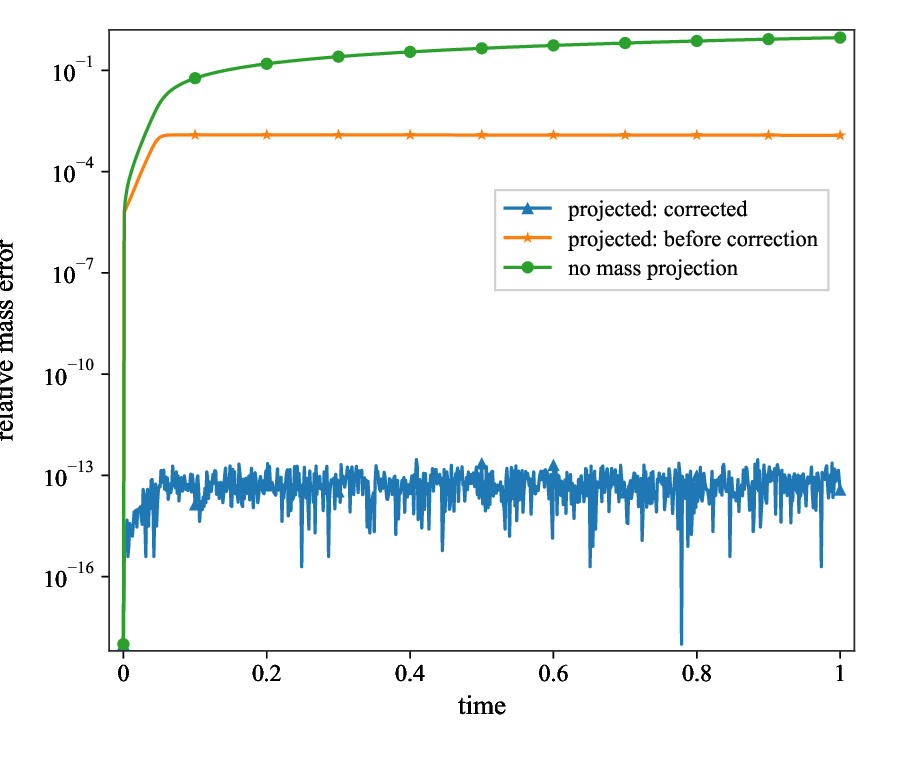}
    }\hspace{0.04\textwidth}%
    \subfigure[Relative energy perturbation induced by the mass projection.\label{fig:combined-ch-projection-energy}]{
      \includegraphics[hiresbb,trim={-7.920000bp 16.662857bp 23.348571bp 13.371429bp},clip,width=0.35\textwidth,height=0.245\textwidth]{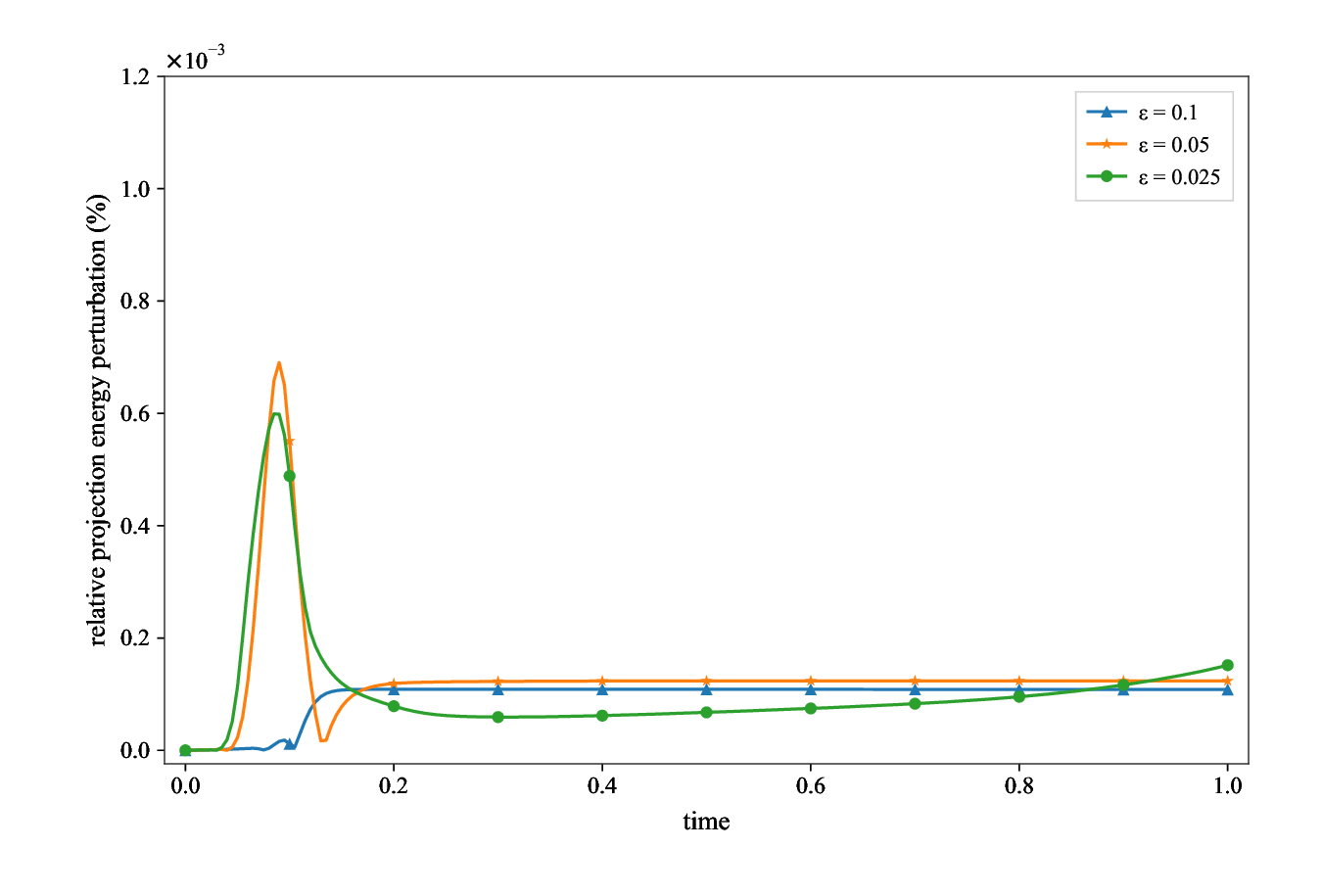}
    }%
  }
\caption{Energy behavior and mass conservation for the source-free two-dimensional
Cahn--Hilliard equation in Example~6.}
  \label{fig:combined-ch-energy-mass}
\end{figure}

{\bf Example 7.} 
This example considers the forced three-dimensional Cahn--Hilliard equation. The ellipsoid~\eqref{eq:ac-ellipsoid-3d} is embedded in $\mathcal{B}=[-1,1]^3$, and the exact solution is prescribed as
\begin{equation*}\label{eq:ch-manufactured-solution-3d}
u_{\rm ex}(x,y,z,t)
=0.8+0.2\exp(-\beta t)
\left(\rho^4(x,y,z)-\frac{128}{1155}\right),
\end{equation*}
where $\beta=0.1/\varepsilon$ and $\rho(x,y,z)=1-\frac{x^2}{a^2}-\frac{y^2}{b^2}-\frac{z^2}{c^2}.$ The initial condition is $u^0(x,y,z)=0.8+0.2(\rho^4(x,y,z)-128/1155)$.

For the spatial convergence test, one IMEX step is taken with $\tau=T=\varepsilon$ for $\varepsilon=0.1$, $0.075$, and $0.05$ on grids with $N=64,128,256,512$. Two positive real shifts are obtained. Second-order convergence is observed for all three values of $\varepsilon$ in Table~\ref{tab:ch-manufactured-space-3d}.

\begin{table}[!ht]
\centering
\small
\setlength{\tabcolsep}{5pt}
\renewcommand{\arraystretch}{1.1}
\begin{tabular*}{\linewidth}{@{\extracolsep{\fill}}crrrrr@{}}
\hline
$N$ & $E_\infty(h)$ & $\operatorname{ord}_\infty(h)$ & $E_2(h)$ & $\operatorname{ord}_2(h)$ & $\overline{k}_{\rm G}^{\rm s}/\overline{k}_{\rm G}^{\rm l}$\\
\hline
\multicolumn{6}{c}{$\varepsilon=0.1$}\\
64 & 1.31e-02 & -- & 1.09e-03 & -- & 21/17\\
128 & 2.30e-03 & 2.51 & 2.27e-04 & 2.27 & 23/18\\
256 & 4.77e-04 & 2.27 & 5.19e-05 & 2.13 & 20/15\\
512 & 1.12e-04 & 2.09 & 1.29e-05 & 2.01 & 20/16\\
\hline
\multicolumn{6}{c}{$\varepsilon=0.075$}\\
64 & 2.50e-02 & -- & 1.64e-03 & -- & 21/18\\
128 & 4.52e-03 & 2.47 & 3.45e-04 & 2.25 & 23/19\\
256 & 9.05e-04 & 2.32 & 8.05e-05 & 2.10 & 20/16\\
512 & 2.20e-04 & 2.04 & 1.98e-05 & 2.02 & 20/16\\
\hline
\multicolumn{6}{c}{$\varepsilon=0.05$}\\
64 & 6.25e-02 & -- & 3.28e-03 & -- & 21/20\\
128 & 1.07e-02 & 2.55 & 6.81e-04 & 2.27 & 23/20\\
256 & 2.19e-03 & 2.28 & 1.59e-04 & 2.10 & 20/17\\
512 & 5.54e-04 & 1.99 & 3.91e-05 & 2.02 & 20/17\\
\hline
\end{tabular*}
\caption{Spatial convergence results for the forced three-dimensional Cahn--Hilliard equation in Example~7.}
\label{tab:ch-manufactured-space-3d}
\end{table}

For the temporal convergence test, $N=256$ and $T=\varepsilon$ are fixed, and $\tau/\varepsilon=1/10,1/20,1/40$, and $1/80$ are used. Two positive real shifts are obtained. First-order convergence of the paired indicator is observed for all three values of $\varepsilon$ in Table~\ref{tab:ch-manufactured-time-3d}.

\begin{table}[!ht]
\centering
\small
\setlength{\tabcolsep}{5pt}
\renewcommand{\arraystretch}{1.1}
\begin{tabular*}{\linewidth}{@{\extracolsep{\fill}}crrrrr@{}}
\hline
$\tau/\varepsilon$ & $E_{\infty,{\rm pair}}(\tau)$ & $\operatorname{ord}_{\infty,{\rm pair}}(\tau)$ & $E_{2,{\rm pair}}(\tau)$ & $\operatorname{ord}_{2,{\rm pair}}(\tau)$ & $\overline{k}_{\rm G}^{\rm cont}/\overline{k}_{\rm G}^{\rm disc}$\\
\hline
\multicolumn{6}{c}{$\varepsilon=0.1$}\\
$1/10$ & 1.93e-03 & -- & 3.94e-04 & -- & 33/33\\
$1/20$ & 9.65e-04 & 1.00 & 1.97e-04 & 1.00 & 32/32\\
$1/40$ & 4.82e-04 & 1.00 & 9.82e-05 & 1.00 & 32/32\\
$1/80$ & 2.41e-04 & 1.00 & 4.91e-05 & 1.00 & 33/33\\
\hline
\multicolumn{6}{c}{$\varepsilon=0.075$}\\
$1/10$ & 1.95e-03 & -- & 4.15e-04 & -- & 34/34\\
$1/20$ & 9.73e-04 & 1.00 & 2.07e-04 & 1.00 & 33/33\\
$1/40$ & 4.86e-04 & 1.00 & 1.03e-04 & 1.00 & 33/33\\
$1/80$ & 2.43e-04 & 1.00 & 5.16e-05 & 1.00 & 34/34\\
\hline
\multicolumn{6}{c}{$\varepsilon=0.05$}\\
$1/10$ & 1.93e-03 & -- & 4.37e-04 & -- & 35/35\\
$1/20$ & 9.66e-04 & 1.00 & 2.18e-04 & 1.00 & 35/35\\
$1/40$ & 4.83e-04 & 1.00 & 1.09e-04 & 1.00 & 34.98/34.98\\
$1/80$ & 2.41e-04 & 1.00 & 5.43e-05 & 1.00 & 35/35\\
\hline
\end{tabular*}
\caption{Paired temporal convergence results for the forced three-dimensional Cahn--Hilliard equation in Example~7.}
\label{tab:ch-manufactured-time-3d}
\end{table}

\noindent\textbf{Accuracy and GMRES iteration counts for complex-conjugate shifts}

We compare an implementation using one complex solve (C1) with an implementation using two successive complex solves (C2). C1 recovers the real-valued solution using \eqref{eq:ch-complex-reconstruction}, whereas C2 solves the two shifted Neumann problems consecutively and takes the real part of the resulting solution.

The ellipsoidal domain and manufactured solution are those of Example~7. We take one IMEX step with $\varepsilon=0.1$, $s=2$, and $\tau=T=5\times10^{-4}$, which gives the complex-conjugate shifts $100\pm100\mathrm{i}$. Table~\ref{tab:complex-shift-comparison} reports the errors and total GMRES iteration count $k_{\rm G}$ per time step. For C2, $k_{\rm G}$ is the sum of the iteration counts for the two shifted solves.
\begin{table}[!htbp]
\centering
\small
\setlength{\tabcolsep}{3pt}
\renewcommand{\arraystretch}{1.1}
\begin{tabular*}{\linewidth}
{@{\extracolsep{\fill}}ccrrrrr@{}}
\hline
$N$ & Method & $E_2(h)$ & $\operatorname{ord}_2(h)$ & $E_\infty(h)$
    & $\operatorname{ord}_\infty(h)$ & $k_{\rm G}$\\
\hline
64  & C1 & 1.33e-03 & --   & 1.58e-02 & --   & 15\\
    & C2 & 8.58e-04 & --   & 8.34e-03 & --   & 30\\
\hline
128 & C1 & 2.93e-04 & 2.19 & 3.05e-03 & 2.37 & 16\\
    & C2 & 1.91e-04 & 2.16 & 1.39e-03 & 2.58 & 32\\
\hline
256 & C1 & 6.96e-05 & 2.07 & 5.82e-04 & 2.39 & 14\\
    & C2 & 4.65e-05 & 2.04 & 3.05e-04 & 2.19 & 28\\
\hline
512 & C1 & 1.72e-05 & 2.02 & 1.51e-04 & 1.95 & 14\\
    & C2 & 1.15e-05 & 2.01 & 7.67e-05 & 1.99 & 28\\
\hline
\end{tabular*}
\caption{Accuracy and GMRES iteration counts for one and
two complex solves.}
\label{tab:complex-shift-comparison}
\end{table}

In this test, C1 retains the observed second-order spatial convergence in both norms, with errors of the same order of magnitude as those of C2, while halving the total GMRES iteration count per time step.


{\bf Example 8.} 
The source-free three-dimensional Cahn--Hilliard equation ($g_{\rm CH}=0$) is solved on the toroidal domain~\eqref{eq:torus-domain-3d} in $\mathcal{B}=[-1,1]^3$. The initial condition is prescribed as
\begin{equation*}\label{eq:ch-compact-initial-3d}
u^0(x,y,z)=
\begin{cases}
\displaystyle
\frac14\sin^4\!\left(\frac{\pi(x-R)}{0.25}\right)
\sin^4\!\left(\frac{\pi y}{0.25}\right)
\cos^4\!\left(\frac{\pi z}{0.5}\right),
& |x-R|\leq0.25,\ |y|\leq0.25,\ |z|\leq0.25,\\
0,&\text{otherwise}.
\end{cases}
\end{equation*}

The phase evolution is computed on a $256^3$ grid with $\varepsilon=0.1$, $\tau=0.00125$, and $T=1$. The cross-section at $z=0$ is shown in Figure~\ref{fig:combined-ch-evolution-3d}.

\begin{figure}[!ht]
  \centering
  \includegraphics[width=0.75\linewidth]{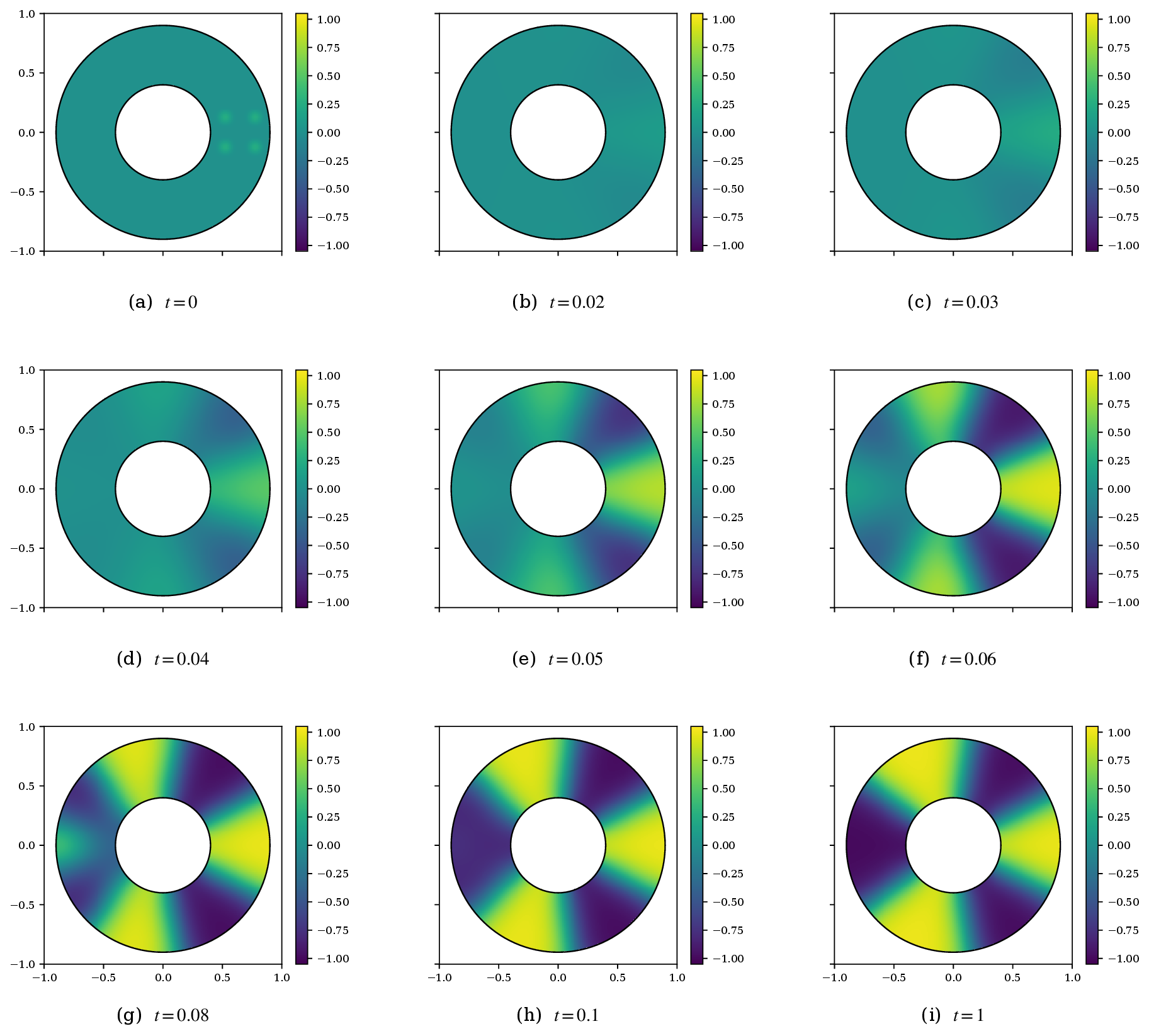}
  \caption{Source-free three-dimensional Cahn--Hilliard phase evolution in Example 8.}
  \label{fig:combined-ch-evolution-3d}
\end{figure}

Normalized energy dissipation is observed in Figure~\ref{fig:combined-ch-normalized-energy-3d} on a $512^3$ grid with $\tau=0.01$, $T=1$, and $\varepsilon=0.05$, $0.075$, $0.1$. To assess the spatial convergence of the total energy, $\varepsilon=0.1$, $\tau=0.001$, and $T=0.02$ are fixed. The results for $N=64,128,256$ are compared with a reference at $N_{\rm ref}=512$. Second-order convergence is observed relative to this reference in Figure~\ref{fig:combined-ch-energy-convergence-3d}.

Mass conservation is assessed in the same energy-dissipation runs. For $\varepsilon=0.05$, $0.075$, and $0.1$, maximum relative mass errors of $6.581197\times10^{-11}$, $5.799868\times10^{-11}$, and $3.749796\times10^{-11}$ are obtained after projection in Figure~\ref{fig:combined-ch-mass-conservation-3d}. Before projection, the corresponding maxima are $5.448396\times10^{-3}$, $3.297774\times10^{-3}$, and $1.896395\times10^{-3}$. The mass defects are reduced by approximately eight orders of magnitude, and a relative mass error below $7\times10^{-11}$ is maintained.

The single-step energy perturbations are shown in Figure~\ref{fig:combined-ch-projection-energy-3d}. For $\varepsilon=0.05$, $0.075$, and $0.1$, maximum ratios $r_{\mathrm{proj}}^{n+1}$ of $7.577413\times10^{-4}\%$, $3.232565\times10^{-4}\%$, and $1.351845\times10^{-4}\%$ are obtained. The corresponding cumulative ratios $R_{\mathrm{proj}}$ are $1.098217\times10^{-2}\%$, $5.531078\times10^{-3}\%$, and $1.835807\times10^{-3}\%$. Only a small perturbation relative to the observed dissipation is introduced, and no increase in the projected total energy is detected in these runs.

\begin{figure}[!ht]
  \centering
  \makebox[\textwidth][c]{%
    \subfigure[Energy dissipation for different interfacial parameters.\label{fig:combined-ch-normalized-energy-3d}]{
      \includegraphics[hiresbb,trim={-7.920000bp 16.662857bp 23.348571bp 13.371429bp},clip,width=0.35\textwidth,height=0.245\textwidth]{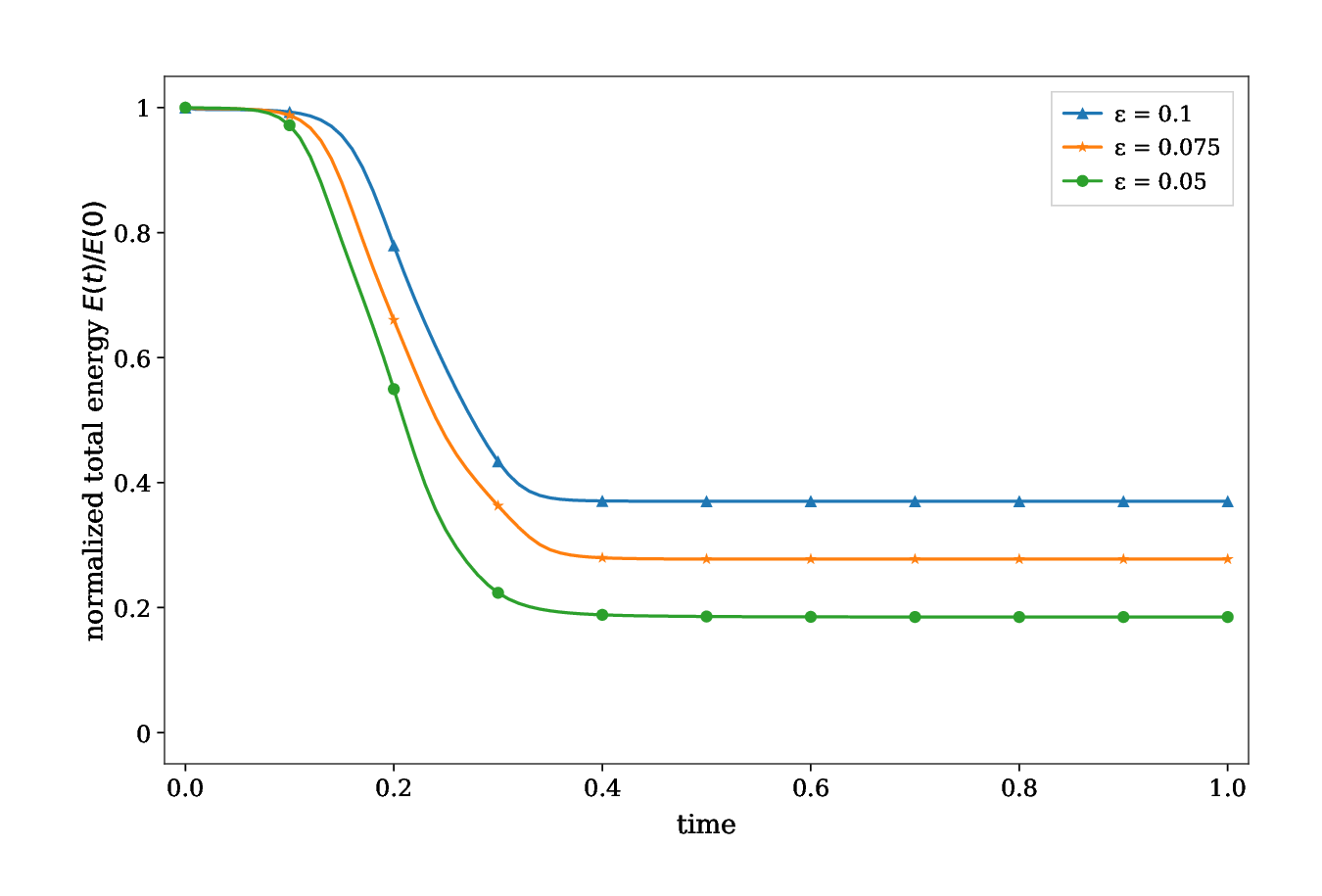}
    }\hspace{0.04\textwidth}%
    \subfigure[Second-order spatial convergence of the total energy (log-log plot).\label{fig:combined-ch-energy-convergence-3d}]{
      \includegraphics[hiresbb,trim={0.126425bp 9.481714bp 3.271968bp -7.811143bp},clip,width=0.35\textwidth,height=0.245\textwidth]{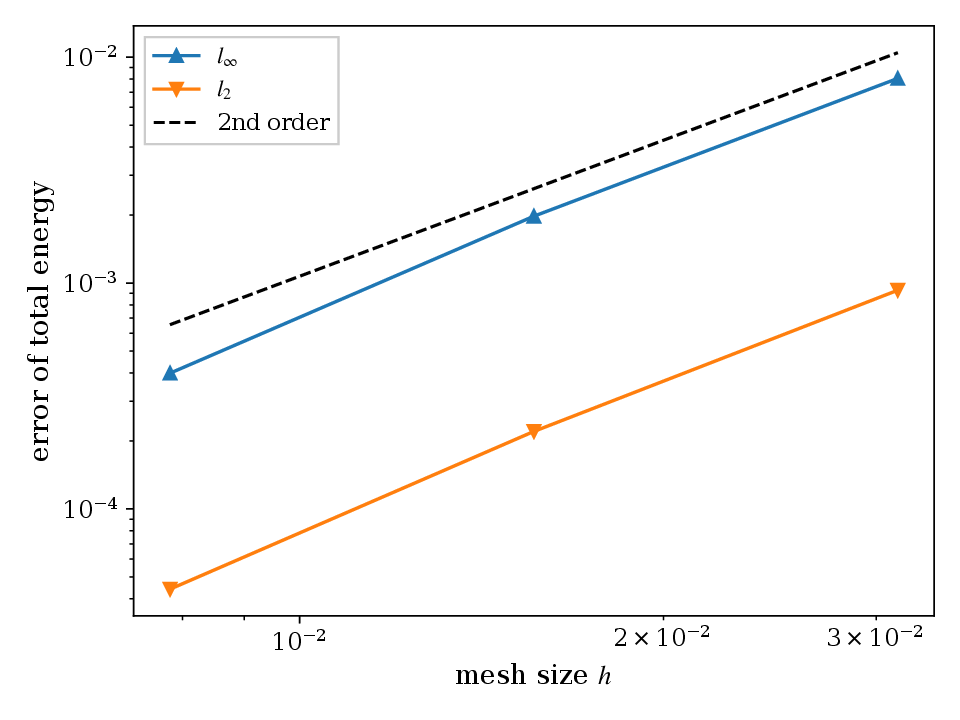}
    }%
  }

  \par\bigskip

  \makebox[\textwidth][c]{%
    \subfigure[Relative mass errors after projection.\label{fig:combined-ch-mass-conservation-3d}]{
      \includegraphics[hiresbb,trim={4.320000bp 18.195429bp 26.537143bp 15.685714bp},clip,width=0.35\textwidth,height=0.245\textwidth]{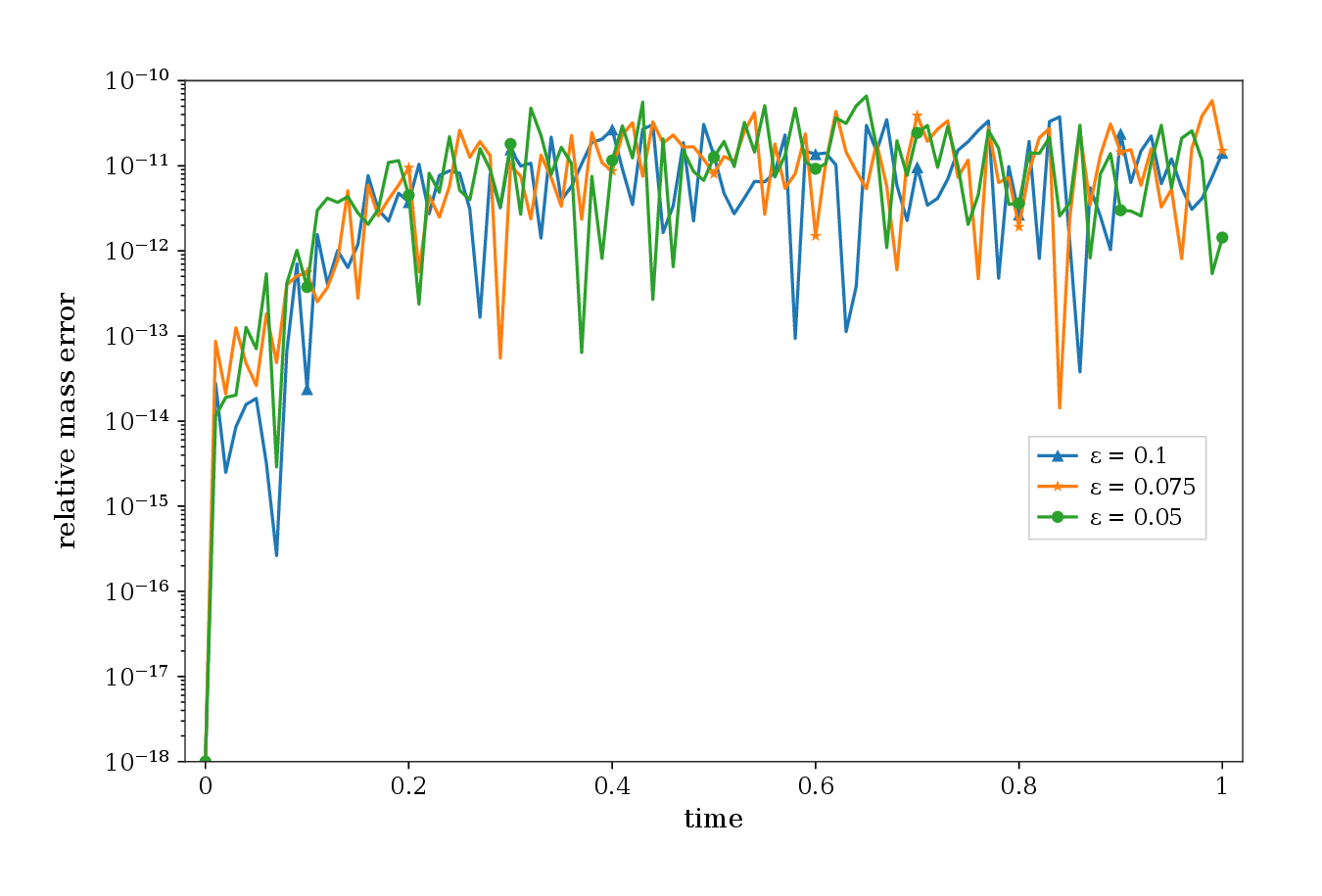}
    }\hspace{0.04\textwidth}%
    \subfigure[Relative energy perturbation induced by the mass projection.\label{fig:combined-ch-projection-energy-3d}]{
      \includegraphics[hiresbb,trim={4.320000bp 18.195429bp 26.537143bp 15.685714bp},clip,width=0.35\textwidth,height=0.245\textwidth]{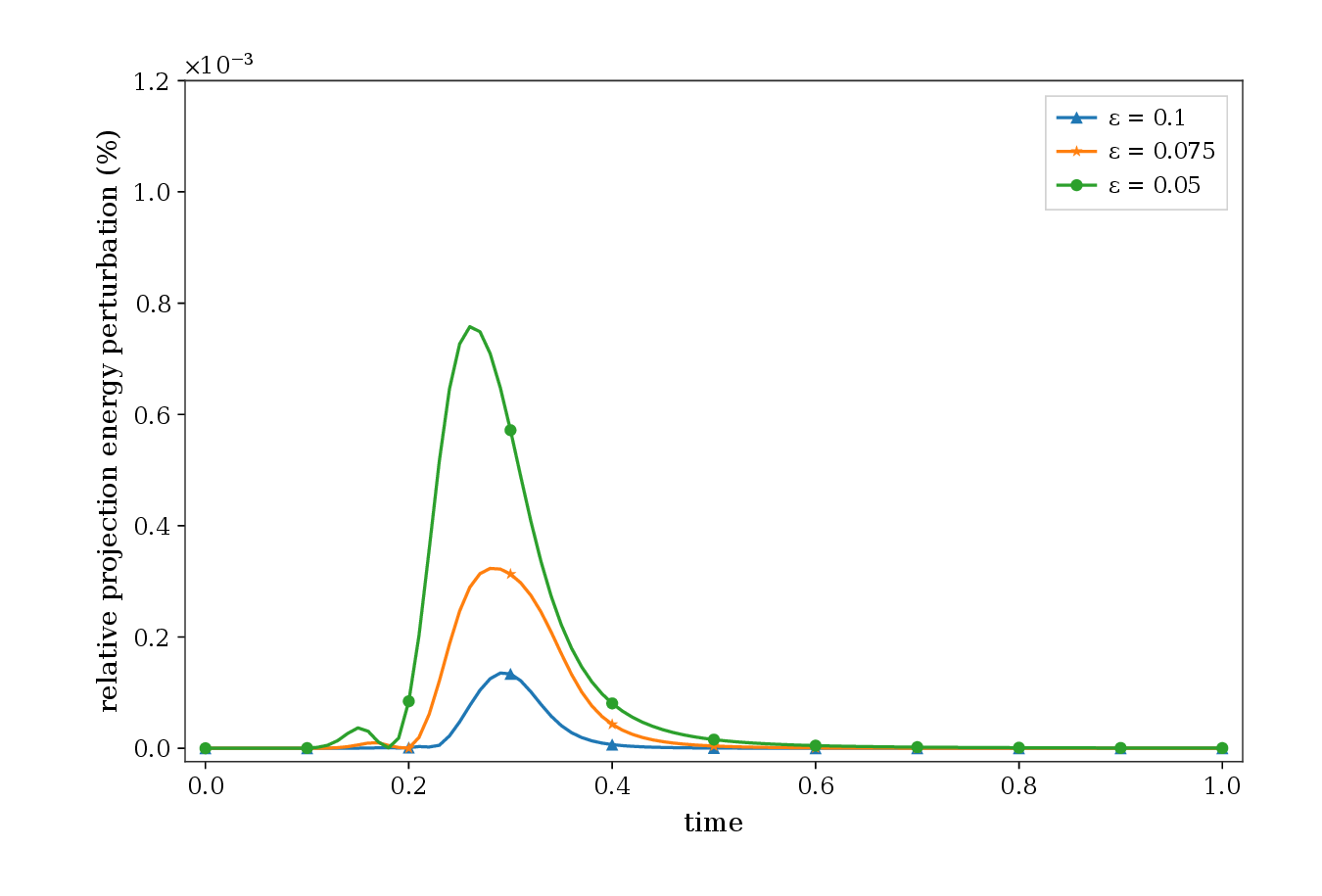}
    }%
  }
  \caption{Energy behavior and mass conservation for the source-free three-dimensional Cahn--Hilliard equation in Example~8.}
  \label{fig:combined-ch-energy-mass-3d}
\end{figure}

\section{Conclusions and Discussion}

This work develops a unified Cartesian-grid KFBI framework for the Allen--Cahn and Cahn--Hilliard equations with homogeneous no-flux boundary conditions on two- and three-dimensional irregular domains.  Each Allen--Cahn update reduces to a Neumann modified Helmholtz problem. For the Cahn--Hilliard equation, an auxiliary-variable reformulation reduces each update to shifted Neumann modified Helmholtz subproblems while avoiding explicit evaluation of the Laplacian of the nonlinear source. These subproblems are handled by the same KFBI solver. In the complex-conjugate case, the real-valued update is reconstructed from a single complex KFBI solve. 

For the Cahn--Hilliard equation, a geometry-weighted projection restores the prescribed discrete mass after each KFBI update. The quadrature weights account for the irregular geometry, and the mass calculation uses only values at physical grid nodes. The projection minimizes the correction in the associated discrete $L^2$ norm subject to the mass constraint, yielding a spatially
uniform shift.

Numerical experiments on two- and three-dimensional irregular domains exhibit second-order spatial accuracy, while paired temporal-error indicators exhibit first-order convergence, consistent with the first-order IMEX discretization. Source-free simulations capture phase evolution and show dissipation of the discrete energy. For the Cahn--Hilliard equation, the projection maintains the prescribed discrete mass, and the energy perturbation introduced by the correction remains small relative to the observed dissipation.

The constant mass correction leaves the discrete gradient energy unchanged, but its effect on the bulk energy can have either sign. The observed energy dissipation therefore provides numerical evidence of dissipative behavior without establishing unconditional energy stability of the fully discrete projected scheme. Future work will focus on incorporating mass conservation directly into the KFBI discretization and developing a fully discrete analysis that couples mass preservation with energy dissipation.

\section*{Acknowledgments}
W. Ying is supported by the National Natural Science Foundation of China in the Division of Mathematical Sciences (Project No. 12471342), partially supported by the National Key R\&D Program of China (Project No. 2024YFA1012403) and the fundamental research funds for the central universities.

\bibliography{math}
\bibliographystyle{siam}

\end{document}